\documentclass[12pt]{article}
\usepackage{ct}

\usepackage{enumitem,subcaption}

\specs{14}{4 (1)}{2024}{}

\dateline{Jun 26, 2021}{Feb 20, 2024}{Jun 30, 2024}

\keywords{Analytic combinatorics}

\MSC{05A16, 05E10}

\title{A criterion for sharpness\\in tree enumeration and the asymptotic number of triangulations in Kuperberg's $G_2$ spider}

\author[1]{Robert Scherer\thanks{This material is based upon work supported by the National Science Foundation under Grant No. DMS-1800725.}}

\affil[1]{%
Department of Mathematics, University of California, Davis, U.S.A.

\email{rescherer@ucdavis.edu }%
}


\newcommand{\oh}[0]{\mathcal{O}}
\newcommand{\Id}[0]{\operatorname{Id}}
\newcommand{\Arg}[0]{\operatorname{Arg}}


\begin{document}

\maketitle

\begin{abstract}
We prove a conjectured asymptotic formula of Kuperberg from the representation theory of the Lie algebra $G_2$. 
Given two non-negative integer sequences $(a_n)_{n\geq 0}$ and~$(b_n)_{n\geq 0}$, with $a_0=b_0=1$, it is well-known that if the identity $B(x)=A(xB(x))$ holds for the generating functions $A(x)=1+\sum_{n\geq 1} a_n x^n$ and $B(x)=1+\sum_{n\geq 1} b_n x^n$, then $b_n$ is the number of rooted planar trees with $n+1$ vertices such that each vertex having~$i$ children may be colored with any one of $a_i$ distinct colors. Kuperberg proved 
a specific case when this identity holds, namely when $b_n=\dim \operatorname{Inv}_{G_2} (V(\lambda_1)^{\otimes n})$, where $V(\lambda_1)$ is the 7-dimensional fundamental representation of $G_2$, and $a_n$ is the number of triangulations of a regular $n$-gon such that each internal vertex has degree at least $6$. He also observed that $\limsup_{n\to\infty}\sqrt[n]{a_n}\leq 7/B(1/7)$ and conjectured that this estimate is sharp, or, in terms of power series, that the radius of convergence of $A(x)$ is exactly $B(1/7)/7$. We prove this conjecture by introducing a new criterion for sharpness in the analogous estimate for general power series $A(x)$ and $B(x)$ satisfying $B(x)=A(xB(x))$.  Moreover, by way of singularity analysis performed on a recently discovered generating function for $B(x)$, we significantly refine the conjecture by deriving an asymptotic formula for the sequence $(a_n)$.
\end{abstract}


\pagebreak 
\section{Introduction}\label{Introduction} 
\subsection{Motivation from representation theory}
We explore the sharpness of a certain estimate that occurs naturally in the asymptotic enumeration of rooted trees. Our motivation is a particular problem from the literature, a conjecture formulated by Kuperberg in his study of the representation theory of simple rank-$2$ Lie algebras~\cite[Conjecture 8.2]{Kuperberg}. 

Specifically, set $a_0=1$, and for each positive integer $n$, let $a_n$ denote the number of triangulations of a regular $n$-gon, such that the minimum degree of each internal vertex is $6$. The sequence begins
\[
(a_n)_{n=0}^\infty =1, 0, 1, 1, 2, 5, 15, 50, 181, 697,\dots
\]and is indexed in the On-Line Encyclopedia of Integer Sequences (OEIS, \cite{OEIS}) by A059710. 
Next, let $b_0=1$, and for each positive integer $n$, let $b_n$ denote the dimension of the vector subspace of invariant tensors in the $n$-th tensor power of the 7-dimensional fundamental representation of the exceptional Lie algebra $G_2. $ The sequence begins
\[
(b_n)_{n=0}^\infty=1, 0, 1, 1, 4, 10, 35, 120, 455, 1792,\dots
\]
and is indexed in OEIS as A059710.

The sequence $(b_n)$ is also known to have a combinatorial interpretation as the number of lattice walks in the dominant Weyl chamber of the root system for $G_2$ (a $30^\circ$ sector in the triangular lattice in $\mathbb{R}^2$) that start and end at the origin, subject to certain constraints on the steps \cite{Westbury}. This type of model is not unique to $G_2$ or this particular representation. In general, if $V$ is any irreducible representation of any complex semi-simple Lie algebra $L$, there is a similar lattice walk model for the dimension of the space of $L$-invariant $n$-tensors over $V$ \cite[Thm. 5]{Grabiner}. 

Now let $A(x)=1+\sum_{n=1}^\infty a_nx^n$ and $B(x)=1+\sum_{n=1}^\infty b_n x^n$ be the ordinary generating functions for $(a_n)_{n=0}^\infty$ and $(b_n)_{n=0}^\infty$, respectively. In \cite[Section 8]{Kuperberg}, Kuperberg proved the following remarkable identity of formal power series: 
\begin{equation}\label{eqn-functional_equation}
B(x)=A(xB(x)).
\end{equation}
He also observed that $B(x)$ has radius of convergence $1/7$, that $B(1/7)<\infty$, and that, by~\eqref{eqn-functional_equation}, $A(x)$ has radius of convergence at least $(1/7)B(1/7)$,
a constant whose numerical value he estimated to be approximately $6.811.$ He conjectured that this bound is in fact an equality.
\begin{conjecture}[Kuperberg, 1996 \cite{Kuperberg}]\label{conjecture}
\[
\limsup_{n\to\infty} \sqrt[n]{a_n}=7/B(1/7).
\]
\end{conjecture}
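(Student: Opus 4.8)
The plan is to recast Conjecture~\ref{conjecture} as a statement about the dominant singularity of $A$, and to settle it through the analytic theory of the functional equation \eqref{eqn-functional_equation}. Write $r_A$ and $r_B$ for the radii of convergence of $A$ and $B$, so that $r_B=1/7$, $B(r_B)<\infty$, and $\rho:=r_B\,B(r_B)=B(1/7)/7$. By Lemma~\ref{lemma-identity} we have $r_A\ge\rho$, and since $a_n\ge 0$, Pringsheim's theorem makes $r_A$ a singular point of $A$; hence $\limsup_n\sqrt[n]{a_n}=1/\rho$ is equivalent to the assertion that $A$ is \emph{not} analytically continuable to a neighbourhood of $\rho$, i.e. that $\rho<r_A$ is impossible. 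The natural substitution is $u(x):=xB(x)$: plugging into \eqref{eqn-functional_equation} gives $u=xA(u)$, so $u$ is governed by the classical smooth implicit-function (Lagrange) schema whose ``outer'' series is $A$ itself — a power series with non-negative coefficients, value $1$ at $0$, and aperiodic because $a_2=a_3=1$.

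The criterion I would isolate and prove at this level of generality is the following dichotomy for the schema $u=xA(u)$, valid whenever $A$ is as above, $r_B<\infty$, and $B(r_B)<\infty$. \textbf{Case (a):} the characteristic equation $A(\tau)=\tau A'(\tau)$ has a (smallest) solution $\tau$ with $0<\tau<r_A$. Then $h(s):=s/A(s)$ is strictly increasing on $[0,\tau]$, the radius of convergence of $u$ (hence of $B$) is $r_B=\tau/A(\tau)$, one has $B(r_B)=A(\tau)$ and therefore $\rho=r_BB(r_B)=\tau<r_A$, so the estimate is \emph{not} sharp; moreover $u$, and hence $B$, has a square-root singularity at $r_B$, whence $b_n\sim c\,r_B^{-n}n^{-3/2}$ with $c>0$. \textbf{Case (b):} no such $\tau$ exists, i.e. $A(s)>sA'(s)$ throughout $[0,r_A)$. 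Then $h$ is strictly increasing and analytic on $[0,r_A)$ with analytic inverse $u$; using that $A(\rho^-)=B(r_B)<\infty$ (by \eqref{eqn-functional_equation}) together with the fact that $A$ is singular at $r_A$, one shows that $u$, hence $B=u/x$, cannot be continued past $h(r_A^-)=r_A/A(r_A^-)$, that $r_B$ equals this value, and consequently that $\rho=r_A$ — so the estimate \emph{is} sharp. This is the promised criterion: sharpness holds precisely in Case (b); in particular sharpness holds whenever $(b_n)$ fails to be asymptotic to a constant times $r_B^{-n}n^{-3/2}$, since the only alternative, Case (a), would produce exactly that asymptotic.

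It then remains to certify that the $G_2$ data land in Case (b). For this I would use the recently-discovered closed form for $B(x)$ and perform classical singularity analysis at its dominant singularity $x=1/7$ — which is the unique singularity of $B$ on the circle $|x|=1/7$, since $b_n>0$ for all $n\ge 2$ and $\gcd(2,3)=1$ force aperiodicity. This should yield an expansion $B(x)=P(x)+c\,(1-7x)^{\gamma-1}+\cdots$ near $x=1/7$, with $P$ analytic, $c\neq0$, and $\gamma\neq 3/2$; one expects $\gamma=7$ (matching the Weyl-integration/random-walk heuristic: half the rank plus the number of positive roots of $G_2$), so that $B$ has a \emph{logarithmic}, not a square-root, singularity and $b_n\sim C\,7^{\,n}n^{-7}$. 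Since $\gamma\neq 3/2$ excludes Case (a), we are in Case (b), which gives $\limsup_n\sqrt[n]{a_n}=1/\rho=7/B(1/7)$, proving the conjecture. Moreover, transporting this singular expansion through $u=xB(x)$ and the identity $A(y)=y/u^{-1}(y)$ gives a genuine asymptotic formula $a_n\sim C'\,(7/B(1/7))^n\,n^{-\gamma}$ — the promised refinement.

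I expect the crux to be twofold. First, stating and proving the dichotomy cleanly \emph{at the level of radii of convergence}: the textbook smooth implicit-function schema is developed under the standing assumption that the characteristic equation has an interior root (Case (a)), so the work here is in treating Case (b) — and any borderline situation where the root sits exactly at $r_A$ — controlling the possibility $A(r_A^-)=\infty$, and justifying the boundary identity $A(\rho^-)=B(r_B)$ and the ensuing analytic-continuation argument by Abelian limits together with the implicit function theorem on a complex neighbourhood. Second, the singularity analysis of the explicit generating function for $B$ at $x=1/7$: extracting the exact subexponential order (hence confirming $\gamma\neq 3/2$) and, for the refinement, the leading constant, and then doing the bookkeeping that converts that expansion into a singular expansion of $A$ at $\rho$. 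The remaining ingredients — aperiodicity, uniqueness of the dominant singularity, and the nonnegativity inputs — are routine.
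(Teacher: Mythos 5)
Your dichotomy is, in substance, the paper's own criterion: your Case (a) is Theorem \ref{thm-growth} (Meir--Moon), your Case (b) is the Pringsheim-plus-local-inversion continuation argument of Theorem \ref{thm-main}, and your conclusion that sharpness holds whenever $(b_n)$ is not of order $r_B^{-n}n^{-3/2}$ is exactly Theorem \ref{thm-criterion-1}, applied after checking aperiodicity via $a_2=a_3=1$. The only divergence is how the growth of $(b_n)$ is certified: the paper's first proof (Section \ref{proof}) obtains $b_n\sim K\,7^n n^{-7}$ by a saddle-point evaluation of the character-theoretic double contour integral (Proposition \ref{prop-asymptotics}), whereas you go directly to singularity analysis of the closed form of Theorem \ref{thm-Bostan}; the paper does that too, in Section \ref{sec-improved}, where it produces the refinements (\ref{eqn-b_series}), (\ref{eqn-a_asym}) and a second proof of the conjecture. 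So the architecture matches, and both of your routes to the $n^{-7}$ estimate are viable.

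One step of your plan is not justified as you state it. Positivity and aperiodicity of $(b_n)$ do \emph{not} imply that $1/7$ is the unique singularity of $B$ on its circle of convergence (for instance $\frac{1}{1-z}+\frac{1}{10(1+z)}$ has positive, aperiodic coefficients and singularities at both $z=1$ and $z=-1$), and even uniqueness on the circle is weaker than what the transfer theorems require, namely analytic continuation of $B$ to a Delta-domain beyond $|z|=1/7$. This is precisely the point the paper has to work for: it uses the fact (from \cite{Bostan}) that $B$ satisfies a third-order linear ODE whose only finite singular points are $0,-1/2,-1,1/7$, which continues $B$ to $\mathbb{C}\setminus\big((-\infty,-1/2]\cup[1/7,\infty)\big)$, and then extracts the $(1-7z)^6\log(1-7z)$ term (note your ansatz $c(1-7x)^{\gamma-1}$ with $\gamma=7$ must carry the logarithm, or the term is analytic). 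Alternatively, the saddle-point route sidesteps Delta-analyticity of $B$ altogether, since a two-sided bound $b_n=\Theta(7^n n^{-7})$ already excludes your Case (a). So the outline is correct, but the continuation input should be credited to the ODE/hypergeometric structure (or replaced by the contour-integral estimate) rather than labeled routine.
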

It is straightforward to show that the $\limsup$ is in fact a limit (see Section \ref{sec-outline}). We prove here that this conjecture is true and explicitly identify the value of Kuperberg's constant $7/B(1/7)$.   Moreover, we go beyond the exponential growth term to establish a true asymptotic formula for~$(a_n)$, and we derive a full asymptotic expansion for $(b_n)$. The precise result is as follows.
\begin{theorem}\label{thm-conjecture}
Let $A(x)$ and $B(x)$ be as above. Define constants $\rho$, $K$, and $M$ by:
\begin{align}
\rho&=\frac{7}{B(1/7)}\\
K&=\frac{4117715\sqrt{3}}{864\pi}&\approx& \,2627.6\label{eqn-K}\\
M&=\frac{4\sqrt{3}}{421875\pi}\left(\frac{8575\pi-15552\sqrt{3}}{2592\sqrt{3}-1429\pi}\right)^7&\approx&\,1721.0 \label{eqn-M}\\ \nonumber
\end{align}
Then we have the following:
\begin{enumerate}[wide, labelindent=0pt, label=\normalfont]
\item[\normalfont(a)]
\textbf{Kuperberg's conjecture is true.} As $n\to\infty$,
\begin{equation} \label{eqn-first}
a_n=\rho^{n+o(n)}.
\end{equation}
\item[\normalfont(b)]
\textbf{Explicit value of $\rho$.}
The constant $\rho$ has the explicit value
\begin{equation}\label{eqn-rho}
\rho=\frac{5\pi}{8575\pi - 15552\sqrt{3}}\,\approx \,6.8211.
\end{equation}
\item[\normalfont(c)]\textbf{Asymptotic expansion of $b_n$.}
As $n\to \infty$, the sequence $(b_n)$ grows asymptotically as 
\begin{equation}\label{eqn-b_asymptotic}
b_n= K\frac{7^n}{n^7}\left(1+\oh\left(\frac{1}{n}\right)\right).
\end{equation}
Furthermore, there exists a computable sequence of rational numbers $(\kappa_i)_{i=7}^\infty$, with $\kappa_7 \! = \! K\pi/\sqrt{3}$, such that as $n\to\infty$,
\begin{equation}\label{eqn-b_series}
b_n\sim \frac{7^n\sqrt{3}}{\pi} \sum_{i=7}^\infty \frac{\kappa_i}{n^i}.
\end{equation}
\item[\normalfont(d)]\textbf{Asymptotic formula for $a_n$.} 
Conjecture \ref{conjecture} admits the following refinement. As $n\to \infty$,
\begin{equation}\label{eqn-a_asym}
a_n= M\frac{\rho^n}{n^{7}}\left(1+\oh\left(\frac{\log n}{n}\right)\right).
\end{equation}
\end{enumerate}
\end{theorem}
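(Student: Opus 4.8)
The plan is to attack the four parts through a combination of the author's abstract "sharpness criterion" for the functional equation $B(x)=A(xB(x))$ and classical singularity analysis applied to a known closed form for $B(x)$. I would proceed as follows.

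First, for part (c), I would start from the recently-discovered generating function for $B(x)$ alluded to in the abstract — presumably an algebraic or hypergeometric-type expression for the $G_2$ walk/invariant-tensor series. Since $b_n$ counts lattice walks in a Weyl-chamber cone, the general theory (reflection principle / Lindström–Gessel–Viennot, or the explicit formulas of Grabiner–Magyar) predicts a dominant singularity at $x=1/7$ of the form $(1-7x)^{6}\log(1-7x)$ up to lower-order algebraic terms, which is exactly what produces a $7^n/n^7$ asymptotic with a logarithm-free leading constant. Concretely, I would write $B(x)$ near $x=1/7$ as a convergent combination of powers $(1-7x)^{k}$ and $(1-7x)^{k}\log(1-7x)$, read off that the leading non-polynomial term is $c\,(1-7x)^7\log(1-7x)$, and apply the transfer theorems (Flajolet–Odlyzko) to obtain $b_n \sim \frac{7^n\sqrt3}{\pi}\sum_{i\ge 7}\kappa_i/n^i$ with computable rational $\kappa_i$; pinning down $\kappa_7$ (equivalently $K$) is then a finite residue/Gamma-function computation giving $K=\frac{4117715\sqrt3}{864\pi}$. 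In particular this evaluates $B(1/7)$ in closed form (as a rational combination of $1$, $\pi$, and $\sqrt3$), which immediately yields part (b): $\rho = 7/B(1/7) = \frac{5\pi}{8575\pi-15552\sqrt3}$.

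Next, parts (a) and (d) require transferring information from $B$ to $A$ through $B(x)=A(xB(x))$. By Lemma \ref{lemma-identity}, $A$ has radius of convergence at least $(1/7)B(1/7)=1/\rho$; the substance is showing this is exact and extracting the sub-exponential factor. The map $x\mapsto xB(x)$ is increasing on $[0,1/7]$ with value $\beta:=(1/7)B(1/7)=1/\rho$ at the endpoint, and it has a local expansion there inherited from that of $B$: since $B$ has a $(1-7x)^7\log(1-7x)$-type singularity, so does $y=xB(x)$ about $x=1/7$, i.e. $\beta - y \asymp (1-7x)$ to leading order with a correction of order $(1-7x)^8\log(1-7x)$. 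Inverting this (Lagrange/Newton-polygon style), $1-7x$ is an analytic-up-to-log function of $\beta-y$ near $y=\beta$, and composing with $B(x)=A(y)$ shows $A(y)$ has its dominant singularity exactly at $y=\beta=1/\rho$, of the same type $(1-\rho y)^7\log(1-\rho y)$ up to the analytic prefactor. Singularity analysis then gives $a_n = M\rho^n n^{-7}(1+O(\log n/n))$; the constant $M$ is computed by chasing the leading coefficient through the inversion, which is why it comes out as $\frac{4\sqrt3}{421875\pi}\big(\frac{8575\pi-15552\sqrt3}{2592\sqrt3-1429\pi}\big)^7$ — a seventh power because the singular exponent is $7$ and the inversion contributes the derivative of $xB(x)$ at $1/7$ raised to that power. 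The weaker error term $O(\log n/n)$ (versus $O(1/n)$ for $b_n$) reflects the logarithmic factor interacting with the composition. Statement (a) is then just the crude consequence $a_n=\rho^{n+o(n)}$, and the fact that $\limsup\sqrt[n]{a_n}$ is a genuine limit follows from the supermultiplicativity-type argument promised in Section \ref{sec-outline}.

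The main obstacle I anticipate is twofold. The conceptual hard part is the \emph{sharpness criterion}: proving that the singularity of $A$ is located exactly at $xB(x)\big|_{x=1/7}$ and is not cancelled — a priori $A$ could be analytic past $\beta$ if the singular part of $B$ happened to be "explained" entirely by the substitution. The author's new criterion must give checkable conditions (presumably: $B$ is singular at its radius of convergence, $xB(x)$ is finite and has nonzero derivative there, and some aperiodicity/non-polynomial condition) under which no such cancellation occurs; verifying its hypotheses for the $G_2$ data is where the representation-theoretic input really enters. The technical hard part is the singularity analysis of $B$ itself: one needs the explicit special-function form of $B(x)$ precise enough near $x=1/7$ to justify the $(1-7x)^7\log(1-7x)$ expansion with \emph{rational} coefficients and to evaluate $B(1/7)$ in closed form — in particular identifying the $\log$ coefficient and ruling out a pure power-law singularity, since an integer exponent $7$ forces a logarithm only if the analytic continuation genuinely has one. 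Once those two points are secured, parts (a)–(d) follow by the machinery above together with routine (if lengthy) constant-chasing.
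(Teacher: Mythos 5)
Your strategy---expand $B$ near $z=1/7$ from the closed form of Bostan et al., transfer to get (c) and the value $B(1/7)$ (hence (b)), then invert $y=xB(x)$ at the singularity and compose to obtain a singular expansion of $A$ at $1/\rho$, from which (d) follows and (a) is a corollary together with the Fekete superadditivity argument---is essentially the paper's Section~\ref{sec-improved} route (the paper also gives an independent proof of (a) via a saddle-point estimate $b_n\sim K7^n/n^7$ combined with the criterion of Theorem~\ref{thm-criterion-1}). However, there is a genuine gap at the step where you apply singularity analysis to $A$. The transfer theorems require $A$ (equivalently $\psi(z)=z/A(z)$) to be analytic in a full Delta-domain around the disk $|z|<1/\rho$: analytic at every boundary point except $1/\rho$ and continuable slightly beyond the circle. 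Your proposal only inverts $y$ locally near $y=1/\rho$ on the positive axis and never addresses (i) why $A$ has no other singularities on $|z|=1/\rho$, (ii) why the inverse-function continuation of $\psi$ takes values inside the slit plane $\tilde{\Omega}$ on which $B$ has been continued (i.e.\ avoids the cuts $(-\infty,-1/2]$ and $[1/7,\infty)$), which is what lets $y\circ\psi=\Id$ persist, and (iii) the sector-shaped continuation at $1/\rho$ itself, which in the paper uses $y'(1/7)>0$ and explicit integral estimates. This is the content of Proposition~\ref{prop-psi} and its two supporting lemmas, and it is the technical heart of part (d); without it the transfer step for $(a_n)$ is unjustified.

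Two further points. The leading singular term of $B$ is $-\tfrac{K}{6!}(1-7z)^{6}\log(1-7z)$, not $(1-7z)^{7}\log(1-7z)$ as you assert in two places: an exponent-$6$ log term is what yields $n^{-7}$ under transfer (exponent $7$ would give $n^{-8}$), and the seventh power appearing in $M$ does not come from ``the singular exponent being $7$'' but from rescaling the sixth-power log term under the change of variable $W=\tfrac{7}{7+\rho\lambda}V$ together with one extra factor $1/(7/\rho+\lambda)$ from normalizing the inversion---in effect a seventh power of $7/(\rho\,y'(1/7))$. Finally, you flag the sharpness/non-cancellation of the singularity of $A$ at $y(1/7)$ as an unresolved obstacle; in your own composition route it would be settled by exhibiting the nonzero coefficient of $V^{6}\log V$ in the expansion of $A$ (or, as in the paper, by Theorem~\ref{thm-criterion-1} together with the $n^{-7}$ growth of $(b_n)$, or by finiteness of $y'(1/7)$ combined with Theorem~\ref{thm-main}), but as written your proposal leaves this key step as a black box rather than proving it.
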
 
Furthermore, because it is known (e.g. \cite{Flajolet}, Theorem VII.8) that the dominant asymptotic term of the coefficients of an algebraic power series cannot have a subexponential growth factor of the form $n^{-k}$ for $k$ a positive integer, the estimates \eqref{eqn-b_series} and \eqref{eqn-a_asym} immediately imply the following corollary.

\begin{corollary}
$A$ and $B$ are not algebraic.
\end{corollary}

\subsection{A criterion for sharpness in tree enumeration}

It is interesting to view Conjecture \ref{conjecture} in a more general context, as an asymptotic enumeration problem in the combinatorial theory of rooted trees, where the functional equation \eqref{eqn-functional_equation} has a classical interpretation.
A \textit{planar rooted tree} is an undirected acyclic graph, equipped with a distinguished node and an embedding in the plane (so that distinct subtrees dangling from the same node are ordered amongst themselves). 
Let $A(x)=1+\sum_{n\geq 1} a_n x^n$, where the $a_n$'s are arbitrary non-negative integers, and suppose that $B(x)=1+\sum_{n\geq 1} b_nx^n$ is related to $A(x)$ via~\eqref{eqn-functional_equation}. Then $b_n$ counts the number of planar rooted trees with $n+1$ nodes (including the root), such that for each $i\geq 1$, an internal node having $i$ children can be colored with one of $a_i$ distinct colors, and leaves are not colored. 

If $A(x)$ and $B(x)$ have radii of convergence $R>0$ and $r>0$, respectively, and they satisfy~\eqref{eqn-functional_equation}, then the inequality $rB(r)\leq R$ holds if $a_n\geq 0$ for all $n\geq 1$ (see Lemma \ref{lemma-identity}). It is natural then to ask when equality holds, namely when $rB(r)=R$. We address this question in Section~\ref{section_2} and derive a criterion for equality in the estimate $rB(r)\leq R$. A simplified version of this criterion reads as follows. 
\begin{theorem}[Criterion for sharpness, simplified version]\label{thm-criterion-1}
With $A(x)$, $B(x)$, $R$, and $r$ as in the preceding paragraph, assume that $a_n\geq 0$ for all $n\geq 1$ and that $\operatorname{gcd}(\{n\geq 1: a_n > 0\})=1$. Then
\[
b_nr^n \neq \Theta(n^{-3/2}) \text{ as } n\to\infty \implies R=rB(r). 
\]
\end{theorem}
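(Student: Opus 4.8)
The plan is to prove the contrapositive: assuming $R\neq rB(r)$, I will show that $b_nr^n=\Theta(n^{-3/2})$. By Lemma~\ref{lemma-identity} we have $rB(r)\leq R$, so the assumption sharpens this to $rB(r)<R$, and in particular $B(r)<\infty$. Introduce the tree function $T(x)=xB(x)$, which has non-negative coefficients, the same (finite, positive) radius of convergence $r$ as $B$, satisfies $[x^n]T(x)=b_{n-1}$, and, upon rewriting \eqref{eqn-functional_equation}, obeys $T(x)=xA(T(x))$ for $|x|<r$; letting $x\to r^-$, which is legitimate because $T(r)=rB(r)<R$ so that $A$ is continuous at $T(r)$, gives $T(r)=rA(T(r))$. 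The goal is to show that $T$ has a square-root branch point at $x=r$ and no other singularity on $|x|=r$, so that singularity analysis yields $[x^n]T(x)\sim c\,r^{-n}n^{-3/2}$ for a constant $c>0$.

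First I would pin down the singularity at $r$. Since $T$ has non-negative coefficients, Pringsheim's theorem makes $x=r$ a genuine singularity of $T$; since $A$ is analytic near $T(r)<R$, the function $\Psi(x,t)=t-xA(t)$ is analytic near $(r,T(r))$ and vanishes there. If $\Psi_t(r,T(r))=1-rA'(T(r))$ were nonzero, the analytic implicit function theorem would extend $T$ analytically across $r$, a contradiction; hence $rA'(T(r))=1$, so $(r,T(r))$ is a characteristic point of the schema $T=xA(T)$ lying \emph{strictly inside} the disk $|t|<R$ on which $A$ is analytic -- this is exactly where the hypothesis $rB(r)<R$ is used. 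Next I would exploit $\gcd\{n\geq1:a_n>0\}=1$ twice. It forces $A$ to be non-affine (otherwise $A(x)=1+a_1x$ with $a_1>0$, giving $T(x)=x/(1-a_1x)$ and $T(r)=\infty$, contradicting $T(r)<R$), whence $A''(t)>0$ for $t>0$; in particular the non-degeneracy $A''(T(r))>0$ holds, with $T(r)>0$. And it yields aperiodicity of $T$: with $D=\{i\geq1:a_i>0\}$, the numerical semigroup $\langle D\rangle$ is cofinite, and a short induction on representations in $\langle D\rangle$ (attach $d-1$ leaves and one recursively built subtree to a root with $d\in D$ children) shows $[x^{k+1}]T(x)>0$ for all $k\in\langle D\rangle$, so $\{n:[x^n]T(x)>0\}$ is cofinite.

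With these facts established, I would invoke the smooth (singular) implicit-function schema of Flajolet and Sedgewick for $\Phi(x,t)=xA(t)$: the characteristic point $(r,T(r))$, the non-degeneracy $A''(T(r))>0$, and aperiodicity together give that $r$ is the unique dominant singularity of $T$, that $T$ is $\Delta$-analytic there, and that $T(x)=T(r)-\gamma\sqrt{1-x/r}+\oh(1-x/r)$ as $x\to r$, with $\gamma=\sqrt{2T(r)/(rA''(T(r)))}>0$. The transfer theorem then yields $[x^n]T(x)\sim\tfrac{\gamma}{2\sqrt{\pi}}\,r^{-n}n^{-3/2}$, i.e. $b_{n-1}\sim\tfrac{\gamma}{2\sqrt{\pi}}\,r^{-n}n^{-3/2}$, and shifting the index gives $b_nr^n\sim\tfrac{\gamma}{2r\sqrt{\pi}}\,n^{-3/2}=\Theta(n^{-3/2})$, completing the contrapositive. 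I do not expect a deep obstacle here: the conceptual heart is the recognition that $rB(r)<R$ is precisely the subcritical condition that places the characteristic point in the interior of $A$'s disk and so forces the generic square-root branch point, and the remaining steps are verifications of the standard tree-function hypotheses. The most delicate technical points are the uniqueness of the dominant singularity on $|x|=r$, which rests on aperiodicity via the Daffodil lemma, and the non-degeneracy $A''(T(r))\neq0$, both dealt with above.
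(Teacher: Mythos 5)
Your proposal is correct and follows essentially the same route as the paper: arguing the contrapositive, you use Pringsheim plus an implicit-function/analytic-continuation argument to show $rB(r)<R$ forces the characteristic equation $rA'(T(r))=1$ (equivalently $A(\tau)-\tau A'(\tau)=0$ at $\tau=T(r)$), which is the content of Theorem~\ref{thm-main}, and then invoke the standard square-root-singularity result (your Flajolet--Sedgewick smooth implicit-function schema is the same statement as the Meir--Moon Theorem~\ref{thm-growth}) to conclude $b_nr^n=\Theta(n^{-3/2})$. The only differences are cosmetic: you continue $T$ via the bivariate implicit function theorem where the paper locally inverts $\psi(z)=z/A(z)$, and you spell out aperiodicity via the numerical semigroup, which the paper absorbs into the quoted gcd hypothesis of Theorem~\ref{thm-growth}.
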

Conjecture \ref{conjecture} follows from this criterion. Indeed, as we will see in Section \ref{proof}, a formula from the character theory of Lie algebra representations implies the estimate $b_n/7^n =\Theta (n^{-7})$ for the sequence $(b_n)$ in the conjecture.

\subsection{Structure of the paper} 

In {\bf Section \ref{section_2}}, we will derive Theorem \ref{thm-criterion-1} from a more technical version of the sharpness criterion, which is presented as Theorem $\ref{thm-criterion}$. We will also provide two examples, aside from our central example, which is Conjecture \ref{conjecture}. Example \ref{example_1} is classic, and Example \ref{example_2} is new.

In {\bf Section \ref{proof}}, we will prove \eqref{eqn-first}. As mentioned above, in light of Theorem~\ref{thm-criterion-1} the key step is to obtain a growth estimate for the sequence $(b_n)$. We will obtain the estimate \eqref{eqn-b_asymptotic} through saddle-point analysis of a known formula from the character theory of Lie algebra representations (see Proposition~\ref{prop-asymptotics}).

In {\bf Section~\ref{sec-improved}}, we will prove the remaining parts of Theorem \ref{thm-conjecture}, namely the explicit value of $\rho$ in \eqref{eqn-rho} and the asymptotic formulas \eqref{eqn-b_series} and \eqref{eqn-a_asym} for $(b_n)$ and $(a_n)$. The authors of~\cite{Bostan} give a new representation of the generating function $B(x)$ in terms of hypergeometric series. The precise value of $\rho$ will be evaluated from their generating function. Then, we will apply to their generating function an approach called {\it singularity analysis}. In particular, we will appeal to the classical transfer theorems of Flajolet and Odlyzko (see the \nameref{sec-appendix}) to relate the asymptotics of $B(x)$ and $A(x)$ near their respective singularities, $1/7$ and $\rho$, to the asymptotic growth of the coefficient sequences $(b_n)$ and $(a_n)$. A critical step in this process involves showing that~$B(x)$ and $A(x)$ each admit an analytic continuation to a Delta-domain, as defined in the \nameref{sec-appendix}. Another critical step is the derivation of singular expansions for these functions within their Delta-domains. This all will require significant technical work. 
Similar analysis occurred in~\cite{BC} and  \cite{Felsner} in the context of enumerating certain planar graphs. It might be interesting to note that it is precisely the fact that $rB(r)=R$ in our central example that is the source of the subtlety in extending $B(x)$ and $A(x)$ to Delta-domains and establishing their singular expansions, as compared to the situation $rB(r)<R$, which has been more thoroughly studied in the literature and in which case the complex analysis is typically straightforward.
Example~\ref{example_2} and the remark preceeding it may further clarify this perspective.  

In the {\bf \nameref{sec-appendix}}, we review the transfer theorems of Flajolet and Odlyzko and the concept of a Delta-domain, which will be used several times throughout the paper.


\section{Sharpness criterion proof and examples} \label{section_2} 
\subsection{Preliminary  lemmas}
We begin with two lemmas that reframe the combinatorial relationship \eqref{eqn-functional_equation} as an identity of analytic functions. The ideas presented here are well-known in the area of analytic combinatorics and originally arose in the context of tree enumeration. For a full treatment, the interested reader can consult \cite{Flajolet}, particularly sections I.5, VII.4, and VII.5.

Let $A(x)=1+\sum_{n\geq 1} a_nx^n$ and $B(x)=1+\sum_{n\geq 1} b_nx^n$ be power series satisfying \eqref{eqn-functional_equation}. When $B(x)$ has a positive radius of convergence, we would like to know when the identity \eqref{eqn-functional_equation} of formal power series is also an identity of the complex analytic functions defined by these power series in a neighborhood of the origin, since then we may apply analytic methods. A sufficient condition is given by Lemma \ref{lemma-identity} below.

We will adopt a useful convention of setting $y(x):=xB(x)$, whereby the identity \eqref{eqn-functional_equation} can be rewritten as
\begin{equation}
\label{eqn-functional_equation_2}
y(x)=xA(y(x)).
\end{equation}
The coefficient sequence $(y_n)_{n\geq 1}$ of $y(x)=\sum_{n \geq 1}y_nx^n$ is then given simply by $y_n=b_{n-1}$, for~$n\geq 1$. Recall that if $A(x)$ is known, then the coefficients $(y_n)$ are unique and can be determined from the Lagrange Inversion Theorem \cite[Ch. 5.4]{Stanley}: For all $n\geq 1$,
\begin{equation}\label{eqn-Lagrange}
y_n=\frac{1}{n}[x^{n-1}](A(x)^{n}),
\end{equation}
where $[x^n]f(x)$ denotes the $n$'th coefficient of a power series $f(x)$.

When thinking of these power series as analytic functions, we will use the variable $z$ or $w$, e.g. writing $y(z)$; whereas writing $y(x)$ will emphasize thinking of formal power series combinatorially, without considerations of convergence or analytic continuation. This distinction is more psychological than mathematical and could also just be ignored.

\begin{lemma}\label{lemma-identity}
Let $A(x)=1+\sum_{n\geq 1}a_nx^n$ and $y(x)=\sum_{n \geq 1}y_nx^n$ be power series over $\mathbb{R}$ related by \eqref{eqn-functional_equation_2}, with $a_n\geq 0$ for all $n\geq 1$
. Then $y_n\geq 0$ for all $n$. Let $R$ be the radius of convergence of $A$, and let $r$ denote the radius of convergence of $y$. Then $R>0$ iff $r>0$. Moreover,
\begin{equation}\label{eqn-functional_equation_3}
y(z)=zA(y(z))
\end{equation} for $z\in \mathbb{C}$ such that $|z|<r$. Also, $y(r)\leq R$, including when $y(r)=\infty$. \end{lemma}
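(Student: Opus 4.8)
The plan is to extract each assertion from the structure of the functional equation $y(x)=xA(y(x))$, working first at the level of formal power series and then promoting the identity to one of analytic functions. First I would verify that $y_i\geq 0$ for all $i$ by induction: comparing coefficients of $x^n$ on both sides of $y(x)=xA(y(x))$ expresses $y_n$ as a polynomial with non-negative coefficients in $a_1,\dots,a_{n-1}$ and $y_1,\dots,y_{n-1}$ (indeed $y_1=1$, and $y_n=\sum_{k\geq 1}a_k[x^{n-1}]y(x)^k$ for $n\geq 2$), so non-negativity propagates. This also shows the $y_i$ are uniquely determined, so there is a genuine formal power series solution to compare against.

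Next I would prove the equivalence $R>0\iff r>0$. If $R>0$, then $A(x)$ converges on a disc of radius $R$; one can either invoke the analytic implicit function theorem applied to $F(z,w)=w-zA(w)$ near $(0,0)$ (where $\partial_w F(0,0)=1\neq 0$) to get an analytic solution $w=y(z)$ near $0$ whose Taylor coefficients must agree with the formal $y_i$ by uniqueness, or give a direct majorant-series argument bounding $y_n$ by the coefficients of an explicitly convergent series. Conversely, if $r>0$, then since all $y_i\geq 0$ and $y_1=1$, the values $y(\rho)$ for $0<\rho<r$ are finite and strictly increasing, and $A$ is being evaluated at arguments $y(\rho)$ that range over an interval $(0,\sup_{\rho<r}y(\rho))$; the relation $A(y(\rho))=y(\rho)/\rho<\infty$ forces $A$ to converge at those arguments, giving $R>0$. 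Once $r>0$ and the analytic solution exists on $\Omega=\{|z|<r\}$, the identity \eqref{eqn-functional_equation_3} holds there because both sides are analytic on $\Omega$ (the right side because $|y(z)|\leq y(|z|)<R$ for $|z|<r$, so $A$ is evaluated inside its disc of convergence) and they agree as formal power series, hence on a neighborhood of $0$, hence everywhere on the connected set $\Omega$ by the identity theorem.

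Finally, for the boundary inequality $y(r)\leq R$: by monotonicity, $y(r)=\lim_{\rho\uparrow r}y(\rho)$ exists in $(0,\infty]$. For each $\rho<r$ we have $y(\rho)=\rho A(y(\rho))$; if $y(r)$ were finite and strictly greater than $R$, then for $\rho$ close enough to $r$ we would have $y(\rho)>R$, making $A(y(\rho))=\infty$, contradicting $y(\rho)=\rho A(y(\rho))<\infty$. Hence $y(r)\leq R$, and this covers the case $y(r)=\infty$ trivially (it cannot exceed $R$ unless $R=\infty$ as well, in which case there is nothing to prove). The main obstacle I anticipate is the careful bookkeeping in the direction $R>0\implies r>0$ together with establishing that the analytic solution's coefficients really coincide with the formal ones — this is where one must be scrupulous about uniqueness and about the radius on which the implicit-function solution is valid — but this is standard and the argument via the analytic implicit function theorem plus coefficient uniqueness is clean.
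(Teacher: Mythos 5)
Your first two steps are sound: the coefficient induction for $y_i\geq 0$ (the paper instead reads nonnegativity off the Lagrange inversion formula \eqref{eqn-Lagrange}, but induction works equally well and also gives you uniqueness of the formal solution), and the implicit-function-theorem argument for $R>0\implies r>0$ is exactly the paper's. The genuine gap is in the remaining claims, which as written are circular. To prove $r>0\implies R>0$ you invoke the relation $A(y(\rho))=y(\rho)/\rho<\infty$ for $0<\rho<r$; to prove \eqref{eqn-functional_equation_3} on $\Omega$ via the identity theorem you need $A\circ y$ to be analytic on $\Omega$, for which you assert $y(|z|)<R$; and to prove $y(r)\leq R$ you again use the numerical identity at real $\rho$ near $r$. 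None of these inputs has been established at the point where you use it: the statement ``$y(\rho)<R$ for all $\rho<r$'' (equivalently, convergence of the numerical series $A(y(\rho))$) is essentially the inequality $y(r)\leq R$ you are trying to prove, and the validity of the identity at such $\rho$ was itself predicated on it. Nothing in the proposal rules out the a priori scenario that $y(\rho)>R$ for some $\rho<r$, in which case $A(y(\rho))$ diverges and the identity-theorem argument never gets off the ground; the same issue undercuts the parenthetical treatment of the case $y(r)=\infty$, which also needs $R\geq y(\rho)$ for every $\rho<r$.

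The missing ingredient --- and the heart of the paper's proof --- is a monotone (Tonelli-type) rearrangement of a non-negative double series: for real $0\leq\rho<r$, expand $\rho A(y(\rho))=\rho\bigl(1+\sum_{k}a_k\,y(\rho)^k\bigr)$ as a double sum with non-negative terms, interchange the order of summation, and recognize the resulting coefficient identities as exactly the formal relation \eqref{eqn-functional_equation_2}. This yields $\rho A(y(\rho))=y(\rho)$ as an identity in $[0,\infty]$ with no prior assumption on where $y(\rho)$ sits relative to $R$; finiteness of $y(\rho)$ then forces convergence of $A$ at the point $y(\rho)$, hence $R\geq y(\rho)$ for every $\rho<r$, which gives $R>0$, gives $R\geq y(r)$ by letting $\rho\uparrow r$ (covering $y(r)=\infty$), and legitimizes your identity-theorem step, since then $|y(z)|\leq y(|z|)<y(r)\leq R$ for $|z|<r$. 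With that one step supplied, the rest of your plan goes through; without it, the argument begs the question.
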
 

We omit a formal proof, but note that the non-negativity of the coefficients of $y(x)$ follows from \eqref{eqn-Lagrange}, and the other facts can be deduced from the implicit function theorem.
Notice that neither $r$ nor $R$ are required to be finite in the lemma.
However, it is easy to see that if $a_n\geq 1$ for all large enough $n$, then $r<\infty$.

As we have already mentioned, the unique solution $y(x)$ to \eqref{eqn-functional_equation_2} is given by the Lagrange Inversion formula \eqref{eqn-Lagrange}. When the coefficients of $A(x)$ are furthermore non-negative, it is common to associate to $A$ another function $\psi$, defined by
\begin{equation}\label{eqn-psi}
\psi(z):=\frac{z}{A(z)},
\end{equation}
on the set $\{z \in \mathbb{C} :|z|<R \text{ and } A(z)\neq 0\}$, where $R$ is the radius of convergence of $A$. 
On the same set, we have the identity
\begin{equation}\label{eqn-derivative}
\psi'(z)=\frac{A(z)-zA'(z)}{A(z)^2}=\frac{1-\sum_{n\geq 1} a_n(n-1)z^n}{A(z)^2}.
\end{equation}
The key point, explained in the Lemma \ref{lemma-psi}, is that $\psi$ is the functional inverse of $y$.

\begin{remark}
While an attempt is made in what follows to state results with a certain amount of generality, a good class of generating functions to have in mind is that for which $(a_n)_{n \geq 1}$ is an eventually positive and eventually increasing sequence of integers, in which case $(y_n)_{n\geq 1}$ has the same property. This includes the functions from Theorem \ref{thm-conjecture}.
\end{remark}

\begin{lemma}\label{lemma-psi}
Let $A(x)=1+\sum_{n\geq 1}a_nx^n$ and $y(x)=\sum_{n \geq 1}y_nx^n$ be power series over $\mathbb{R}$ related by \eqref{eqn-functional_equation_2}, with $a_n\geq 0$ for all $n\geq 1$. Let $R$ be the radius of convergence of $A$, let $r$ denote the radius of convergence of $y$, and assume that $R>0$. Define $\Omega:=\{z\in\mathbb{C}: |z|<r\}$.

Then the function $y:\Omega\to y(\Omega)$ is a biholomorphism, and $\psi(y(z))=z$ for $z\in\Omega$. In particular, $\psi$ is well-defined and analytic on $y(\Omega)$. 
Furthermore, if $A(x)$ is not identically equal to $1$, then $r<\infty$. Finally, if $y(r)<\infty$, then $y$ extends continuously to the boundary $\partial \Omega$, $\psi$ extends continuously to the boundary of $y(\Omega)$, and $\psi(y(z))=z$ holds for $z \in \partial \Omega. $
\end{lemma}
\begin{proof}
Note that $y(0)=0$, $A(0)=1$, and $A$ is analytic on $y(\Omega)$ by Lemma \ref{lemma-identity}. If $y(z)=0$ for~$z\in\Omega$, then by \eqref{eqn-functional_equation_3} we see that 
$
z=z\cdot 1= zA(y(z))=y(z)=0.
$ So $y$ vanishes at the origin and nowhere else in $\Omega$. Therefore, for $z\in\Omega\setminus{\{0\}}$ we have $A(y(z))=y(z)/z\neq 0$, and $A$ does not vanish on $y(\Omega)$. As a consequence, $\psi$ is defined there and analytic, and it follows from~\eqref{eqn-functional_equation_3} that $\psi(y(z))=z$ for $z\in\Omega$. The injectivity of $y$ is now a simple consequence of~\eqref{eqn-functional_equation_3}. Indeed, if $y(z)=y(w)\neq 0$ for $z,w\in\Omega$, then $z\neq 0$, $w\neq 0$, and
\[
zA(y(z))=y(z)=y(w)=wA(y(w))=wA(y(z)),
\]
so $w=z$.

To see that $r<\infty$ in the case that $A(x)\neq 1$, observe that if instead $y$ is entire, i.e.~${\Omega=\mathbb{C}}$, then $R=\infty$ by Lemma $\ref{lemma-identity}$. Moreover, $[0,\infty)=y([0,\infty))\subset y(\mathbb{C})$, since $y$ generally maps~$[0,r)$ onto $[0,y(r))$.
Since $A$ does not vanish on $y(\mathbb{C})$, $\psi'$
is defined on a neighborhood of $[0,\infty)$. Also, $a_n>0$ for some $n>0$, since $A\neq 1$. 
It follows then from the series expansion in \eqref{eqn-derivative} that $\psi'(\tau)=0$ for some $\tau>0$. By the inverse function theorem, we have~$y'(\psi(\tau))=1/\psi'(\tau)=\infty$, which contradicts that $y$ is entire. By this contradiction we conclude that $r$ is finite. 

To prove the last part, $y(r)<\infty$ implies that $\sum_{n=0}^\infty y_nz^n$ converges on $\overline{\Omega}$, and furthermore that this convergence is uniform, by the non-negativity of the $y_n$ and the Weierstrass M-test. This gives the continuous extension of $y$ to $\partial \Omega$. The same argument can be used to show that $A\circ y$ extends continuously to $\partial\Omega$. By continuity, $zA(y(z))=y(z)$ for $z\in\partial\Omega$, and $A$ does not vanish on $\partial (y(\Omega))$ by the same argument as above. So $\psi(y(z))=z$ for $z\in \partial \Omega$. 
\end{proof}

\subsection{Proof of Theorem \ref{thm-criterion-1}}
Recall from Lemma \ref{lemma-identity} that the identity $y(x)=xA(y(x))$ implies that $A$'s radius of convergence~$R$ satisfies the estimate $y(r)\leq R$, where $r$ is the radius of convergence of $y$. The following theorem provides a sufficient condition for $R$ to equal $y(r)$.

\begin{theorem}\label{thm-main}
Suppose that the generating functions $A(x)=1+\sum_{n=1}^\infty a_nx^n$ and $y(x)=\sum_{n=1}^\infty y_n x^n$, with radii of convergence $R$ and $r$ respectively, satisfy the following two conditions: 
\begin{enumerate}
\item[(1)] $R>0$, and $a_n\geq 0$ for $n\geq 1$.

\item[(2)] $A(x)$ and $y(x)$ are related as formal power series by \eqref{eqn-functional_equation_2}.

\end{enumerate}

If $A(z)-zA'(z)$ does not vanish at $z=y(r)$, then $y(r)=R$.
\end{theorem}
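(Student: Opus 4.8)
I would prove the contrapositive: assuming $A(\tau)-\tau A'(\tau)\neq 0$ at $\tau:=y(r)$, I would show that $y$ continues analytically to a neighborhood of $r$, which is impossible since $y$ has non-negative coefficients (Lemma~\ref{lemma-identity}) and $0<r<\infty$, so $r$ is a singular point of $y$ by Pringsheim's theorem. Here $r>0$ comes from Lemma~\ref{lemma-identity}, and $r<\infty$ because $y_1=1$ would force $y$ to blow up on $[0,\infty)$ if it were entire, while the hypothesis $y(r)<R\leq\infty$ gives $y(r)<\infty$; equivalently, $A\not\equiv 1$, and one may invoke Lemma~\ref{lemma-psi}.

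\textbf{The local picture at $\tau$.} First I would record that, since $\tau=y(r)<R$, the function $A$ is analytic at $\tau$, and $A(\tau)\geq A(0)=1>0$ because the $a_n$ are non-negative and $\tau\geq r>0$. Hence $\psi(z)=z/A(z)$ is analytic on a neighborhood of $\tau$, and by \eqref{eqn-derivative} the assumption $A(\tau)-\tau A'(\tau)\neq 0$ says exactly that $\psi'(\tau)\neq 0$. The boundary part of Lemma~\ref{lemma-psi} (available because $y(r)<\infty$) gives $\psi(\tau)=\psi(y(r))=r$. The inverse function theorem then produces a neighborhood $U$ of $\tau$ on which $\psi$ is injective, together with an analytic map $\phi$ defined on a disk about $r$, with values in $U$, satisfying $\phi(r)=\tau$ and $\psi\circ\phi=\mathrm{id}$.

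\textbf{Gluing $\phi$ onto $y$.} Next, using that $y$ is continuous on $\overline{\Omega}$ with $y(r)=\tau$ (Lemma~\ref{lemma-psi}), I would shrink the disk about $r$ to a disk $D$ small enough that $y(\Omega\cap D)\subseteq U$ (and $D$ lies in the domain of $\phi$). For $z\in\Omega\cap D$ one then has $y(z),\phi(z)\in U$ and $\psi(y(z))=z=\psi(\phi(z))$ (the first equality from Lemma~\ref{lemma-psi}), so $y(z)=\phi(z)$ by injectivity of $\psi$ on $U$. Hence $y$ and $\phi$ agree on the nonempty open set $\Omega\cap D$ and glue to an analytic function on $\Omega\cup D$, an open neighborhood of $r$ — contradicting Pringsheim's theorem. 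Therefore $\psi'(\tau)=0$, and since $A(\tau)\neq 0$ this is precisely the assertion that $A(z)-zA'(z)$ vanishes at $z=y(r)$.

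\textbf{Main obstacle.} The step requiring the most care is the gluing: matching $y$ with $\phi$ on a real segment abutting $r$ does not by itself say $r$ is a regular point of $y$; one must upgrade this to agreement of $y$ and $\phi$ on the whole overlap $\Omega\cap D$ before patching, and this hinges on the injectivity of $\psi$ on the neighborhood $U$ into which $\phi$ maps, together with the boundary continuity of $y$ from Lemma~\ref{lemma-psi} (which is what keeps $y(\Omega\cap D)$ inside $U$). Once that is secured, the contradiction with Pringsheim's theorem is immediate, and the remaining ingredients — the inverse function theorem and the identity \eqref{eqn-derivative} for $\psi'$ — are routine.
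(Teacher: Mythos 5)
Your proposal is correct and follows essentially the same route as the paper's proof: assuming $A(y(r))-y(r)A'(y(r))\neq 0$, you use \eqref{eqn-derivative} and the inverse function theorem to invert $\psi$ near $\tau=y(r)$, continue $y$ analytically past $r$, and contradict Pringsheim's theorem. The only cosmetic difference is the gluing step: you use boundary continuity of $y$ at $r$ (Lemma~\ref{lemma-psi}) together with injectivity of $\psi$ on $U$ to get $y=\phi$ on all of $\Omega\cap D$ directly, whereas the paper first matches $y$ with $\psi|_{E}^{-1}$ on $\psi(y(\Omega)\cap E)$ and then invokes uniqueness of analytic continuation.
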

\begin{proof} Toward proving the contrapositive, assume that $R>y(r)$. We then need to argue that~$A(z)-zA'(z)$ vanishes at $z \! = \! y(r)$.
We will show that the assumption that ${A(z)-zA'(z) \! \neq \! 0}$ for $z=y(r)$ leads to a contradiction of the fact that $r$ is the radius of convergence of $y$. Note that $r>0$, by Lemma \ref{lemma-identity}, and that $r<\infty$, as otherwise $y(r)=R=\infty$.

Assume that $A(y(r))-y(r)A'(y(r))\neq 0$. Recall the function $\psi$, defined in \eqref{eqn-psi} by $\psi(z)=z/A(z)$. By Lemma \ref{lemma-psi}, $\psi$ is analytic and equal to $y^{-1}$ on $y(\Omega)$, where $\Omega=\{z\in\mathbb{C}:|z|< r\}$. We claim that there exists a small open disk $E$ centered at $y(r)$, such that $\psi$ extends to be analytic on $y(\Omega)\cup E$. Indeed, since $y(r)<R$, it follows that $A$ is analytic on an open disk, which we call~$E$, that is centered at $y(r)$ and contained in the disk $\{z:|z|<R\}$. Also, since $A$ is continuous and $A(y(r))=y(r)/r >0$, we may assume, by possibly replacing $E$ with a smaller open disk, that $A$ does not vanish on $E$. It follows that $\psi$ is analytic on $y(\Omega)\cup E$, as the reciprocal of a non-vanishing analytic function, and furthermore \eqref{eqn-derivative} holds on $y(\Omega)\cup E$. By \eqref{eqn-derivative}, the assumption that $A(y(r))-y(r)A'(y(r))\neq 0$ implies that $\psi'(y(r))\neq 0$. It follows that $\psi$ is locally invertible at $y(r)$. That is, after possibly replacing $E$ with a smaller open disk centered at $y(r)$, we 
see that the map $\psi|_{E}: E\to \psi(E)$ is a homeomorphism, with an analytic inverse map $\psi|_{E}^{-1}$. Since $r$ is a boundary point of $\Omega$, we have by Lemma \ref{lemma-psi} that~${\varnothing\neq \psi(y(\Omega)\cap E)\subset \Omega \cap \psi(E)}$. Since $\psi|_E$ is injective, one sees that $y$ and $\psi|_{E}^{-1}$ agree on the open set $\psi(y(\Omega)\cap E)$. By uniqueness of analytic continuation $\psi|_{E}^{-1}$ also agrees with $y$ on the larger open set $\Omega \cap \psi(E)$, thus acting as an analytic continuation of $y$ to $\psi(E)$. Since $r\in\psi(E)$, this contradicts a fact known as Pringsheim's Theorem \cite[Thm. 5.7.1]{Hille}, which asserts that an analytic function with non-negative real coefficients and a finite radius of convergence necessarily has a singularity at the point where the boundary of its disk of convergence intersects $[0,\infty)$. This is the contradiction we sought, and the proof is complete. 
\end{proof}

If we phrase the theorem in a slightly different form, by replacing the antecedent with the statement that $A(z)-zA'(z)\neq 0$ for \emph{all} $z$ in $(0,R)$,
then the converse  is well-known to be true, and it follows from Theorem \ref{thm-growth}(1) below. Theorem~\ref{thm-growth} contains even deeper asymptotic information than that, however, in particular regarding the \emph{subexponential} (i.e. polynomial) growth rate of $(y_n)$. This, it turns out, will be instrumental in proving Theorem \ref{thm-conjecture}, as it shows how information about the growth of $(y_n)$ can certify that $A(z)-zA'(z)$ does not vanish on~$(0,R)$, and hence, by Theorem \ref{thm-main}, that $y(r)=R$.

\begin{theorem}[Meir, Moon, 1978 \cite{Meir}]\label{thm-growth}
Suppose that $A(x)=1+\sum_{n=1}^\infty a_nx^n$ and $y(x)=\sum_{n=1}^\infty y_n x^n$ (with radii of convergence $R$ and $r$, respectively) satisfy conditions (1) and (2) of Theorem \ref{thm-main}. If there exists $\tau\in(0,R)$, such that $A(\tau)-\tau A'(\tau)=0$, then
\begin{enumerate}
\item[(1)] $y(x)$ has radius of convergence $r=\tau/A(\tau)$, and $y(r)=\tau<R$. 
\end{enumerate}
If, in addition, $\operatorname{gcd}(\{n\geq 1: a_n > 0\})=1$, 
then
\begin{enumerate}
\item[(2)] the coefficient sequence $(y_n)_{n\geq 1}$ satisfies the following asymptotic estimate: As $n\to\infty$,
\begin{align*}
y_n&=\frac{C}{r^nn^{3/2}}(1+\mathcal{O}(n^{-1}))=\frac{C\cdot A'(\tau)^n}{n^{3/2}}(1+\mathcal{O}(n^{-1})),
\end{align*}
where $C =\sqrt{\frac{A(\tau)}{2\pi A''(\tau)}}$. 
\end{enumerate}
\end{theorem}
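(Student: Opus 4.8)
The plan is to handle the two parts separately, using throughout the function $\psi(z)=z/A(z)$ from \eqref{eqn-psi}, which by Lemma \ref{lemma-psi} is the analytic inverse of $y$. Note first that the hypothesis forces $a_n>0$ for some $n\geq 2$ (otherwise $A(z)-zA'(z)\equiv 1$ has no zero), so by \eqref{eqn-derivative} the function $A(z)-zA'(z)=1-\sum_{n\geq 1}(n-1)a_nz^n$ is strictly decreasing on $[0,R)$ from the value $1$; hence $\tau$ is its \emph{only} zero there, and $\psi'(z)=(A(z)-zA'(z))/A(z)^2$ (using $A\geq 1>0$ on $[0,R)$) is positive on $(0,\tau)$ and negative on $(\tau,R)$. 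Thus $\psi$ is strictly increasing on $[0,\tau]$ with $\psi(\tau)=\tau/A(\tau)$. On the other hand $y$ maps $[0,r)$ bijectively onto $[0,y(r))$ with $\psi=y^{-1}$ there, so $\psi$ is increasing on $[0,y(r))$; this is incompatible with $\psi$ being decreasing just past $\tau$ unless $y(r)\leq\tau$. So $y(r)\leq\tau<R$, and now Theorem \ref{thm-main} forces $A(y(r))-y(r)A'(y(r))=0$, whence $y(r)=\tau$ and $r=\psi(\tau)=\tau/A(\tau)$. This proves (1).

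For (2) the plan is to run a singularity analysis of $y$ at its dominant singularity $w=r$. Differentiating \eqref{eqn-derivative} and using $A(\tau)=\tau A'(\tau)$ gives $\psi''(\tau)=-\tau A''(\tau)/A(\tau)^2<0$ (recall $A''(\tau)>0$). Hence $r-\psi(z)$ has a double zero at $\tau$ with positive leading coefficient, so $r-\psi(z)=(z-\tau)^2G(z)$ with $G$ analytic near $\tau$ and $G(\tau)=\tau A''(\tau)/(2A(\tau)^2)>0$; the map $z\mapsto(z-\tau)\sqrt{G(z)}$ is then a local biholomorphism at $\tau$, and inverting it expresses $z=y(w)$ as an analytic function of $\sqrt{1-w/r}$ in a slit neighborhood of $r$, with the branch fixed by $y(w)\uparrow\tau$ as $w\uparrow r$ along the reals. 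A short computation (using $r=\tau/A(\tau)$) turns this into the singular expansion
\[
y(w)=\tau-\sqrt{\tfrac{2A(\tau)}{A''(\tau)}}\,\sqrt{1-w/r}+\sum_{k\geq 2}c_k\,(1-w/r)^{k/2}.
\]

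To convert this into coefficient asymptotics via the transfer theorems of the \nameref{sec-appendix}, I need $y$ to continue analytically to a Delta-domain at $r$, which requires that $r$ be the \emph{only} singularity of $y$ on the circle $|w|=r$. This is where $\gcd\{n\geq 1:a_n>0\}=1$ enters: it transfers to $\gcd\{m\geq 1:b_m>0\}=1$ (if every $m$ with $b_m>0$ were divisible by some $e\geq 2$, comparing powers of $x$ in $y(x)=xA(y(x))$ would force $a_n=0$ whenever $e\nmid n$), so, since $y_{m+1}=b_m$ and $y_1=1$, the triangle inequality is strict: $|y(w)|<y(|w|)=\tau$ for all $|w|=r$ with $w\neq r$. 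By Lemma \ref{lemma-psi}, $y$ extends continuously to $\partial\Omega$ with $y(w)\neq 0$, hence $A(y(w))=y(w)/w\neq 0$, and the estimate on the numerator of $\psi'$ in \eqref{eqn-derivative} (valid since $|y(w)|<\tau$) gives $\psi'(y(w))\neq 0$; the inverse function theorem then continues $y$ analytically past every boundary point except $r$, while the square-root expansion supplies the continuation near $r$, so $y$ is analytic on a Delta-domain with the above expansion. Applying the transfer theorems term by term — the $(1-w/r)^1$ term is analytic and contributes nothing to the subexponential growth, and each $(1-w/r)^{k/2}$ with $k\geq 3$ contributes $\oh(r^{-n}n^{-5/2})$ — yields
\[
y_n=\sqrt{\tfrac{2A(\tau)}{A''(\tau)}}\cdot\frac{-1}{\Gamma(-1/2)}\cdot\frac{1}{r^nn^{3/2}}\bigl(1+\oh(n^{-1})\bigr)=\frac{C}{r^nn^{3/2}}\bigl(1+\oh(n^{-1})\bigr),
\]
with $C=\sqrt{A(\tau)/(2\pi A''(\tau))}$ since $\Gamma(-1/2)=-2\sqrt{\pi}$; the equivalent form with $A'(\tau)^n$ follows from $A'(\tau)=A(\tau)/\tau=1/r$.

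The main obstacle I anticipate is the global analytic-continuation step: cleanly packaging the smooth inverse-function schema so that $y$ provably extends to a genuine Delta-domain with the claimed singular expansion valid there, and making the aperiodicity argument ($\gcd=1\Rightarrow$ unique dominant singularity on $|w|=r$) airtight. The local computations — the monotonicity of $\psi$ in part (1) and the square-root expansion in part (2) — are routine by comparison, and the transfer-theorem bookkeeping that produces the $\oh(n^{-1})$ relative error is standard.
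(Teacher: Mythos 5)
Your proposal is correct and follows essentially the same route as the paper, which only sketches this (Meir--Moon) result: part (1) via monotonicity of $\psi$ combined with Theorem \ref{thm-main} (i.e.\ the Pringsheim-type continuation argument), and part (2) via the double zero of $r-\psi$ at $\tau$, the resulting square-root singular expansion of $y$, and the Flajolet--Odlyzko transfer theorems. The extra details you supply (the gcd condition forcing a unique dominant singularity on $|w|=r$, and the boundary continuation of $y$ by locally inverting $\psi$) are exactly the steps the paper's proof sketch defers to \cite{Meir}, \cite{Drmota}, and \cite{Flajolet}.
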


The theorem's gcd condition is a mild technicality, satisfied for example if $(a_n)$ is eventually increasing. (When the gcd exceeds 1, the asymptotic formula applies, with a modified constant~$C$, to the subsequence of $(y_n)$ consisting of positive values.)
This theorem appears to have been first established essentially by Meir and Moon in \cite[Thm. 3.1]{Meir}, building on techniques of Darboux \cite{Darboux}, P\'{o}lya \cite{Polya}, and others.
In \cite{Meir_2} the same authors generalize their analysis to a much broader class of functional equations, of which \eqref{eqn-functional_equation_2} is an example. One can consult \cite[Thm. 5]{Drmota} and \cite[Thm. VI.6]{Flajolet} for proofs of this more general result, and one can find in \cite[pp.~467-471]{Flajolet} a brief note about the theorem's history. 

Taken together, Theorems \ref{thm-main} and \ref{thm-growth} imply the following dichotomy. 
\begin{theorem}[Criterion for sharpness, full version]\label{thm-criterion}
Suppose that $A(x)$ and $y(x)$ (with radii of convergence $R$ and $r$, respectively) satisfy conditions (1) and (2) of Theorem \ref{thm-main} and the gcd condition of Theorem \ref{thm-growth}. Then exactly one of the following is true: 
\begin{enumerate}
\item[(1)] $A(z)-zA'(z)$ is non-vanishing for $z\in(0,R)$, and $y(r)=R$.
\item[(2)] $A(z)-zA'(z)$ vanishes at $z=y(r)$, and $y(r)<R$. Moreover, $z=y(r)$ is the unique solution to $A(z)-z A'(z)=0$ on $(0,R)$, and $\displaystyle{y_n=Cr^{-n}n^{-3/2}(1+o(1))}$ as $n\to\infty$, for some constant  $C>0$. 

\end{enumerate}
\end{theorem}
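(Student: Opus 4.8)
The plan is to read Theorem \ref{thm-criterion} as a synthesis of Theorems \ref{thm-main} and \ref{thm-growth}: the dichotomy is simply whether or not the entire function $g(z):=A(z)-zA'(z)$ has a zero in the interval $(0,R)$, and these two cases are manifestly mutually exclusive and jointly exhaustive, so all that remains is to extract the stated conclusions in each branch. Note first that the gcd hypothesis forces $\{n\ge 1:a_n>0\}$ to be nonempty, so $A\not\equiv 1$; together with $R>0$ this gives $0<r<\infty$ by Lemmas \ref{lemma-identity} and \ref{lemma-psi}.

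In the first branch I would assume $g$ is non-vanishing on $(0,R)$ and deduce $R=y(r)$. Lemma \ref{lemma-identity} always gives $y(r)\le R$, and since $y_1=1$ and the $y_n$ are non-negative we have $y(r)\ge r>0$. If $y(r)<R$ held, then $y(r)\in(0,R)$, and Theorem \ref{thm-main} would place a zero of $g$ at $z=y(r)$, contradicting non-vanishing; hence $y(r)=R$, which is conclusion (1).

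In the second branch I would assume $g(\tau)=0$ for some $\tau\in(0,R)$. Writing $g(z)=1-\sum_{n\ge 1}a_n(n-1)z^n$ via \eqref{eqn-derivative}, I would observe that some $a_m$ with $m\ge 2$ must be positive (otherwise $g\equiv 1$ and no such $\tau$ exists), so $g$ is strictly decreasing on $(0,R)$ and $\tau$ is its unique zero there. Theorem \ref{thm-growth}(1) then yields $y(r)=\tau<R$ and $r=\tau/A(\tau)$, and Theorem \ref{thm-growth}(2), available because of the gcd hypothesis, yields $y_n=Cr^{-n}n^{-3/2}(1+\mathcal{O}(n^{-1}))$ with $C=\sqrt{A(\tau)/(2\pi A''(\tau))}$. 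Since $A(\tau)\ge 1>0$ and $A''(\tau)\ge a_m m(m-1)\tau^{m-2}>0$, the constant $C$ is well-defined and strictly positive, so in particular $y_n=Cr^{-n}n^{-3/2}(1+o(1))$, which is conclusion (2).

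I do not anticipate a genuine obstacle here; the theorem is essentially bookkeeping on top of the two preceding results. The only points that will need a moment's care are verifying that $y(r)$ lies in the open interval $(0,R)$ so that Theorem \ref{thm-main} is applicable, and establishing the strict monotonicity of $g$ on $(0,R)$, which is what simultaneously pins down the uniqueness of $\tau$ and guarantees $A''(\tau)>0$, hence $C>0$, in the second branch.
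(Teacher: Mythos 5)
Your proposal is correct and follows the same route as the paper, which simply combines Theorems \ref{thm-main} and \ref{thm-growth} to obtain the dichotomy; your added bookkeeping (strict monotonicity of $A(z)-zA'(z)$ giving uniqueness of $\tau$ and $A''(\tau)>0$, hence $C>0$) is exactly the kind of detail the paper leaves implicit.
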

\begin{proof}
$A(z)-zA'(z)$ either vanishes on $(0,R)$ or it does not. If it does not, then Theorem \ref{thm-main} implies that $y(r)=R$, which is case (1) of the dichotomy. 
If it does vanish, then Theorem \ref{thm-growth} applies, and case (2) is implied.
\end{proof}

In particular, the existence of the subexponential factor $n^{-3/2}$ in the asymptotic expansion of~$y_n$ is implied  when $y(r) \! < \! R$. The absence of the $n^{-3/2}$ factor therefore certifies that~${y(r)=R}$, and Theorem~\ref{thm-criterion-1} immediately follows.

\subsection{Examples}\label{examples}
The generating functions $A(x)$ and $B(x)$ of Theorem \ref{thm-conjecture} illustrate case (1) of Theorem \ref{thm-criterion} and comprise the main example of this paper. Here we give two further examples related to Theorem~\ref{thm-criterion}. Example \ref{example_1} is a classical example of case (2), while Example \ref{example_2} considers the boundary case in the theorem, namely when $A(z)-zA'(z)$ vanishes at $z=R$. This example falls under case (1) and was manufactured specifically to explore the question of whether the statement in case (2) admits a converse. In particular, assuming conditions (1) and (2) of Theorem \ref{thm-main}, if $y_n\sim Cr^{-n}n^{-3/2}$ for a constant $C$, is the inequality $y(r)\leq R$ necessarily strict? In other words, while case (2) of Theorem \ref{thm-criterion} includes the factor $n^{-3/2}$, the absence of which certifies that $y(r)=R$, does the presence of such a factor imply case (2)? The generating function $A(x)$ that we present in Example \ref{example_2} serves as a counterexample to show that the answer is no. We note that aside from its general interpretation in terms of trees and vertex colorings that we have already discussed, Example \ref{example_2}  does not possess an additional combinatorial interpretation that we are aware of, and it is not presently clear how the example should generalize to a broader class of boundary case examples with natural combinatorial significance. As discussed further in the remark preceding it, Example \ref{example_2} will require significantly more technical analysis than Example \ref{example_1} because of the boundary situation $y(r)=R$.
 
\begin{example}[Catalan numbers]\label{example_1}
Let $A(x)=1/(1-x)$, with radius of convergence $R=1$, and let $y(x)=\sum_{n\geq1} y_nx^n$ satisfy the equation $y(x)=xA(y(x))$.
Then for $n\geq 1$,  $y_{n+1}=\genfrac(){0pt}{2}{2n}{n}/(n+1)$ is the $n$'th Catalan number. One way to show this (e.g. \cite[pp. 1-4]{Drmota}) is to solve $y(x)^2-y(x)+x=0$ for $y(x)$ and then develop a Taylor expansion from the solution
\[
y(x)=\frac{1-\sqrt{1-4x}}{2}.
\]
We see that the radius of convergence of $y(x)$ is $r=1/4$, that $y(r)=1/2<R$, and that
\[A(x)-xA'(x)=\frac{1-2x}{(1-x)^2}\]
 vanishes at $y(r)$. 
Thus, we can conclude that $y_n\sim C\cdot4^{n}n^{-3/2}(1+o(1))$ for some constant C. From Theorem \ref{thm-growth}, the value of $C$ is found to be $4^{-1}\pi^{-1/2}$.
\end{example}

\begin{remark}
Although the Lagrange Inversion Formula \eqref{eqn-Lagrange} provides a method to compute~$(y_n)$ exactly from $A(x)$, it does not make it easy to determine the asymptotic growth of $(y_n)$ in the following example. Instead, the transfer method from the \nameref{sec-appendix} will be necessary. An interesting feature of Example \ref{example_2} will be the extra labor we must go through to justify that $y$ extends to a Delta-domain. This is due to having equality in the universal estimate $y(r)\leq R$. Indeed, when $y(r)<R$, as in Example \ref{example_1}, then $A$ and $\psi$ are a priori analytic in a neighborhood of $y(r)$, and this makes the analytic extension of $y$ to a Delta-domain easier to argue in the majority of applications of the transfer theorems in the literature. 
As with Example \ref{example_2}, our central example (Kuperberg's generating function for triangulation counts) contributes a new instance of the atypical setup $y(r)=R$ and more ad hoc arguments to justify Delta-domain continuation based on idiosyncrasies of the functions at hand (see Section \ref{sec-improved}).
\end{remark}

\begin{example}[Boundary case]\label{example_2} 
 Let 
\begin{align*}
A(x)&=6x+2(1-4x)^2-(1-4x)^{5/2}\\
&=1+2x^2+20x^3+10x^4+12x^5+20x^6+\cdots,
\end{align*}
where the radical is in terms of the principal logarithm. Then $A(x)$ has radius of convergence~$R=1/4$, and the Taylor coefficients of $(1-4x)^{5/2}=\sum_{n=0}^\infty x^n(-4)^n \genfrac(){0pt}{2}{5/2}{n}$
are easily seen to be integers (the coefficients of $(1-4x)^{1/2}$ are integers in the previous example), and are negative except for the constant term. Let $y(x)$ be the unique solution to \eqref{eqn-functional_equation_2}, e.g. as determined by Lagrange Inversion \eqref{eqn-Lagrange}, with radius of convergence $r$, to be determined. Then one sees that conditions (1) and (2) of Theorem \ref{thm-main} are met.

Observe that $A(z)-zA'(z) \! \neq \! 0$ for $z \! \in \! (0,R)$, so Theorem \ref{thm-criterion}(1) implies that ${y(r) \! = \! R \! = \! 1/4}$, and also note that $\lim_{z\to R}[A(z)-zA'(z)]=0$, so this is a boundary case. Define $\psi(z)=z/A(z)$ for $|z|<R$ where $A(z)\neq 0$. By Lemma \ref{lemma-psi}, the identity $z=\psi(y(z))$ remains valid at $z=r$, so \[r=\psi(R)=\psi(1/4)=1/6.\]
We observe that
\begin{equation}\label{eqn-example_a}
\frac{1}{6}-\psi(z)=\frac{2(1-4z)^2-(1-4z)^{5/2}}{6A(z)}=(1-4z)^2H(z),
\end{equation}
where $H(z)$ is analytic and non-vanishing on a neighborhood of $R=1/4$ in the slit plane $\displaystyle{\mathbb{C}\setminus{[1/4,\infty)}}$, and $\lim_{z\to 1/4} H(z)=2/9$.

Now we derive asymptotics for the coefficient sequence $(y_n)$ by using Theorem \ref{thm-transfer} in the \nameref{sec-appendix}. To initiate this process we must justify that the function $y$ admits an analytic continuation to a Delta-domain. We will use the fact that $A$ and $y$ are algebraic.
Let
\[
f(y,z)=1024z^{2} y^{5} - 256z^{2} y^{4} + 68z^{2} y^{2} - 64zy^{3} - 20z^{2}y + 20zy^{2} + 3z^{2} - 4zy + y^{2}.
\]
Then one may check that $f(y,\psi(y))=0$ for all $y$ where $\psi(y)$ is defined and non-zero, so in particular on the open set $\{y:|y|<R, y\neq 0\}$.   It follows that the function $y$, defined initially on~$\Omega=\{z:|z|<1/6\}$ by the power series above and being inverse to $\psi$ near 0, is an analytic solution to $f(y(z),z)=0$ on $\Omega$. Furthermore, $f$ is analytic on $\mathbb{C}\times(\mathbb{C}\setminus{\{0\}})$, and the critical points of $f$ (points where both $f$ and $\frac{\partial f}{\partial{y}}$ vanish) can be computed as the roots of the discriminant of $f$ with respect to $y$ \cite[Ch. IV, Sec 8]{Lang}. Using SAGE, we find that the discriminant has roots at~$z=0$, $z=1/6$, $z\approx -0.02$, as well as two complex conjugate roots~$z\approx -0.12 \pm 0.18i$ of modulus larger than $1/6$. As an algebraic function, $y$ extends analytically to any simply-connected domain $\tilde{\Omega}\supset\Omega$ with the following properties: the coefficients of the minimal polynomial of $y(z)$ over $\mathbb{C}(z)$ are analytic (in this case the minimal polynomial is~$f(y,z)/1024z^2$), and $\tilde{\Omega}$ contains no roots of the discriminant of $f$  \cite[p.119]{Smith}. Furthermore, $f(y(z),z)=0$ for all $\tilde{\Omega}$. An example of such a domain is depicted in Figure \ref{fig1}. In particular, $y$ extends to a Delta-domain. 

\begin{figure}[!ht]
\centering
\includegraphics[scale=0.5]{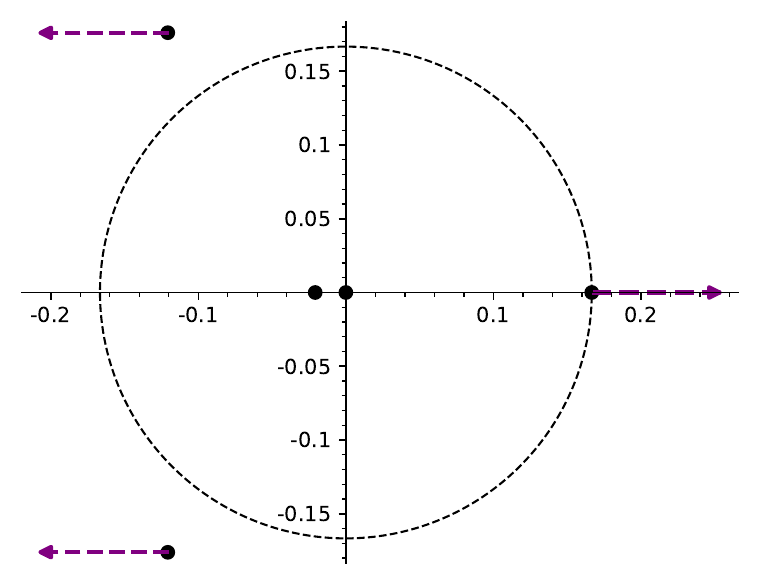}
\caption{The boundary of the disk $\Omega$ and branch cuts whose complement $\tilde{\Omega}$ is a domain of analytic continuation for $y$ and contains a Delta-domain around $\Omega$. Roots of the discriminant of~$f$ are also shown.}
\label{fig1}
\end{figure}

We claim moreover that $y(z)\not\in\mathbb{C}\setminus{[1/4,\infty)}$ for $z\in\tilde{\Omega}$. Indeed, if we view $f(y,z)$ as quadratic over $z$, and collect the coefficients in $\mathbb{C}[y]$ to write $f(y,z)=a(y)z^2+b(y)z+c(y)$, then $f(y,z)$ has discriminant
\[\operatorname{disc}(y)=-\frac{1}{4}(4y - 1)^7 -\frac{1}{2}(4y - 1)^6 - \frac{1}{4}(4y - 1)^5,\]
making it clear that $f(1/4,z)$ has a unique root of 1/6 and that for $y>1/4$, $f(y,z)$ has two complex conjugate roots $z$ and $\bar{z}$. 
Moreover,
one can compute by the quadratic formula that  {\small
\begin{align*}
|z|^2-\frac{1}{36}&=\frac{b(y)^2+\operatorname{disc}(y)}{4a(y)^2}-\frac{1}{36}=\frac{9b(y)^2+9\operatorname{disc}(y)-a(y)^2}{36a(y)^2}\\
&=\frac{-(4y - 1)^{10} - 8(4y - 1)^9 -\cdots - \frac{81}{2}(4y - 1)^3 - \frac{27}{2}(4y - 1)^2}{36(4y - 1)^{10} + 288(4y - 1)^9 +\cdots + \frac{4131}{2}(4y-1)^2+\frac{2916}{4}(4y-1)+\frac{729}{4}},
\end{align*}
}\\
i.e. the non-zero Taylor coefficients of the numerator and denominator are all negative and all positive, respectively. This implies that for $y>1/4$, one has $|z|<1/6$. But we know on the other hand that $|y(z)|<1/4$ for $|z|<1/6$ -- by the non-negativity of $(y_n)$, the fact that~$y(1/6)=1/4$, and the triangle inequality. These two facts imply, since $f(y(z),z)=0$ for all $z\in\tilde{\Omega}$, that $y(z)\not\in [1/4,\infty)$ for $z\in\tilde{\Omega}$. 
In other words, the values of $y$ on $\tilde{\Omega}$ avoid the branch cut $[1/4,\infty)$.

\enlargethispage{.4cm} 
As a result, we may substitute $y(z)$ for $z$ in \eqref{eqn-example_a}, apply the identity $\psi(y(z))=z$, and solve for $y(z)$. We find that
\begin{equation*}\label{eqn-example_b}
y(z)-\frac{1}{4}= -\frac{1}{4(H(y(z)))^{1/2}} \left(\frac{1}{6}-z\right)^{1/2},
\end{equation*}
for $z\in\tilde{\Omega}$.
It follows that  \[y(z)-\frac{1}{4}\sim \frac{-3}{4\sqrt{2}} \left(\frac{1}{6}-z\right)^{1/2},\]
as $z\to1/6$ in $\tilde{\Omega}$. 
We conclude, by Theorem \ref{thm-transfer} in the  \nameref{sec-appendix}, that
\[
y_n\sim\frac{K\cdot 6^n}{n^{3/2}},
\]
as $n\to\infty$, where $K=\frac{\sqrt{3}}{16\sqrt{\pi}}$.
\end{example}


\section{Proof of Conjecture \ref{conjecture}} 
\label{proof} 

For the remainder of the paper, the generating functions $A(x)$ and $B(x)$ are those from Theorem~\ref{thm-conjecture}. Recall that $1/7$ is the radius of convergence of $B(x)$ and $y(x)$, and that $\rho=7/B(1/7)=1/y(1/7)$, and let $R$ denotes the radius of convergence of $A(x)$. Note that Theorem \ref{thm-criterion-1} applies to $A(x)$ and $B(x)$, since it is clear that $(a_n)_{n\geq 1}$ is increasing. Indeed, given any triangulated $n$-gon ($n\geq 3$), one can insert a new external vertex to obtain a triangulated $(n+1)$-gon, without introducing any new internal vertices.

\subsection{Outline of the proof}\label{sec-outline}
Our proof of \eqref{eqn-first} proceeds by the following steps.
\begin{enumerate}
\item[(1)] With $K$ as in Theorem $\ref{thm-conjecture}$, we show in Proposition \ref{prop-asymptotics} that as $n\to \infty$, 
\[
b_n\sim K\frac{7^n}{n^7}.
\]
\item[(2)] Since the sub-exponential growth factor is $n^{7}$, and not $n^{3/2}$, we conclude by Theorem \ref{thm-criterion-1} that $R=1/\rho$.

\item[(3)] It follows that
\begin{equation*}\label{eqn-limsup}
\limsup_{n\to \infty} \sqrt[n]{a_n}=\rho.
\end{equation*}
Observe that $a_{n+m-2}\geq a_n a_m$, for $n,m\geq 2$, since one can always obtain a triangulated $(n+m-2)$-gon by gluing together a triangulated $n$-gon and a triangulated $m$-gon along a common edge, and this process does not introduce any new internal vertices. Thus we obtain, for $n,m\geq 2$,
\[
\log a_n +\log a_m \leq \log a_{n+m-2} \leq \log a_{n+m}.
\]
So $(\log a_n)_{n\geq 2}$ is a superadditive sequence, which implies by a lemma generally attributed to Fekete \cite{Fekete} that
\[
\lim_{n\to\infty}\sqrt[n]{a_n}=\sup_{n\in\mathbb{N}} \sqrt[n]{a_n}.
\]
This limit must be $\rho$, by comparing with the $\limsup$ above, and \eqref{eqn-first} follows directly. 
\end{enumerate}
\subsection[]{Asymptotics of $(b_n)$}
In view of the proof outline from above, the following proposition completes the proof of \eqref{eqn-first}. Let $K$ be as defined in Theorem \ref{thm-conjecture}.
\begin{proposition}\label{prop-asymptotics}
As $n\to\infty$, the sequence $(b_n)_{n=0}^\infty$ satisfies
$
b_n\sim K(7^n/n^7).
$
\end{proposition}
In the introduction we described a lattice walk model in which the $b_n$'s denote the number of $n$-step excursions that start and end at the origin in the Weyl chamber for $G_2$. This interpretation suggests that a local central limit theorem will apply for the return probabilities of random paths of length $n$. A difficulty is that the constraints on allowable steps imply that they are not i.i.d. Luckily, there is a reflection trick due essentially to \cite{Gessel} which allows one to express the return probabilities as local linear combinations of the probabilities that \emph{unconstrained} walks in the weight lattice ($\cong \mathbb{Z}^2$) will end at various nearby points to the origin. Indeed, this method has been applied to derive asymptotic formulas \cite[Thm. 8]{Tate} which imply that the sequence ($\text {dim Inv}_L(V^{\otimes n}))_{n=1}^\infty$, where $V$ is any representation of a complex semi-simple Lie algebra $L$, is asymptotically equivalent to $C\dim(V)^n n^{-\alpha}$, where $\alpha$ is half the dimension of $L$ (in our case, $\alpha=14/2=7$), and the constant term $C$ can also be computed but depends on the specific representation. We nonetheless supply a direct proof of Proposition \ref{prop-asymptotics} based on a saddle-point analysis. Similar analysis for other Weyl chambers was conducted in \cite{Feierl}.

\begin{proof}
The random walk model and the reflection trick mentioned above are encoded by the following formula from character theory \cite[p.15]{Greg}: $b_n$ is the coefficient of $x^ny^n$ in the Laurent polynomial $WM^n$, where
\[
M(x,y)=1+x+y+xy+x^{2}y+xy^{2}+(xy)^{2},
\]
and
\begin{align*}
W(x,y)=x^{-2}y^{-3}(x^2y^3&-xy^3+x^{-1}y^2-x^{-2}y+x^{-3}y^{-1}-x^{-3}y^{-2}\\
&+x^{-2}y^{-3}-x^{-1}y^{-3}+xy^{-2}-x^2y^{-1}+x^3y-x^3y^2).
\end{align*}
We define 
\[f(x,y)=\log\left(\frac{1}{7}M(x,y)\right)-\log(x)-\log(y).
\]

By Cauchy's residue formula for Taylor coefficients, we have
\[
\frac{b_n}{7^n}=\frac{1}{(2\pi i)^2}\oint\oint \left[W(z_1,z_2)\cdot \frac{M(z_1,z_2)^n}{7^n}\cdot \frac{1}{(z_1z_2)^{n+1}}\right] \,dz_1\,dz_2.
\]
We use as contours for both integrals the unit circle about the origin, i.e. $z_1=e^{iu}$ and $z_2=e^{iv}$, for $-\pi \leq u,v\leq \pi$. Thus,
\begin{equation}\label{eqn-countour}
\frac{b_n}{7^n}=\frac{1}{4\pi^2}\int_{-\pi}^\pi\int_{-\pi}^\pi \left[W(e^{iu},e^{iv})\cdot\exp\left(n f(e^{iu},e^{iv})\right)\right]\, du\,dv.
\end{equation}
The function $f$ satisfies $f(1,1)=f_x(1,1)=f_y(1,1)=0$, and thus we have 
\[
f(x,y)=\frac{2}{7}(x-1)^2+\frac{2}{7}(y-1)^2+\frac{2}{7}(x-1)(y-1)+\mathcal{O}((x-1,y-1)^3),
\]
as $(x,y)\to (1,1)$, where the exponent $3$ in the last term signifies a multi-index that ranges over all pairs of non-negative integers whose sum is $3$. It follows that
\[
f(e^{iu},e^{iv})=-\frac{2}{7}u^2-\frac{2}{7}v^2-\frac{2}{7}uv+\mathcal{O}((u,v)^3),
\]
as $(u,v)\to(0,0)$. Now we make an $n$-dependent change of variables in \eqref{eqn-countour}, namely $p=\sqrt{n}u$, and $q=\sqrt{n}v$. 
The integral becomes
{\small{
\begin{equation*}
\frac{4\pi^2b_nn}{7^n}=\int_{-\sqrt{n}\pi}^{\sqrt{n}\pi}\int_{-\sqrt{n}\pi}^{\sqrt{n}\pi} 
\left[W\left(e^{ip/\sqrt{n}},e^{iq/\sqrt{n}}\right)\exp\left(-\frac{2}{7}p^2-\frac{2}{7}q^2-\frac{2}{7}pq+\mathcal{O}\left(\frac{(p,q)^3}{\sqrt{n}}\right)\right)\right]dpdq.
\end{equation*}
}}

By standard estimates, which we omit (and which morally relate to the fact that there is a local central limit theorem for a lattice random walk with i.i.d. steps at work), the above integral can be approximated asymptotically by the completed integral over all of $\mathbb{R}^2$, and the contribution of the $\mathcal{O}((p,q)^3/\sqrt{n})$ term is negligible. Moreover, if for each $k\in\mathbb{N}$ we  let $T_k(p,q)$ denote the order $k$ Taylor approximation for $(p,q)\mapsto W(e^{ip},e^{iq})$, then it suffices to consider integrals of the form 
\[
\int_{-\infty}^\infty \int_{-\infty}^\infty \left[T_{k}(pn^{-1/2},qn^{-1/2})\cdot\exp\left(-\frac{2}{7}p^2-\frac{2}{7}q^2-\frac{2}{7}pq\right)\right].
\]

Computing with SAGE, we find that this integral vanishes for $k<12$, while
\begin{align*}
&\quad\int_{-\infty}^\infty \int_{-\infty}^\infty \left[T_{12}(pn^{-1/2},qn^{-1/2})\cdot\exp\left(-\frac{2}{7}p^2-\frac{2}{7}q^2-\frac{2}{7}pq\right)\right]dpdq\\
&=\frac{1}{n^6}\int_{-\infty}^\infty \int_{-\infty}^\infty \left[T_{12}(p,q)\cdot\exp\left(-\frac{2}{7}p^2-\frac{2}{7}q^2-\frac{2}{7}pq\right)\right]dpdq\\
&=\frac{1}{n^6}\cdot
\frac{4117715\sqrt{3}}{216}\pi.\\
\end{align*}

In total, we have shown that
$\displaystyle{
\frac{4\pi^2b_nn}{7^n}\sim\frac{4117715\sqrt{3}}{216n^6}\pi,}
$
as $n\to\infty$,
which verifies the asserted value of $K$. 
\end{proof}


\section{Proof of Theorem \ref{thm-conjecture}} 
\label{sec-improved}
Having already proven Conjecture \ref{conjecture}, which is part (a) of Theorem \ref{thm-conjecture}, in this section we complete the proof of Theorem \ref{thm-conjecture}, parts (b)--(d).

Recall the functions $y(z)=zB(z)$ and $\psi(z)=z/A(z)$, where $A$ and $B$ be are still the functions from Theorem \ref{thm-conjecture},  and recall that $1/\rho = y(1/7)$. We will use these function names to also indicate analytic continuations to larger domains than their original disks of convergence. Here is an outline of the main steps of the proof, with brief descriptions. Each step has its own dedicated subsection below. 

\begin{enumerate}
    \item[(1)] {\bf Evaluation of $\rho$ and analytic continuation of $B$.} We will verify \eqref{eqn-rho} in Proposition~\ref{prop-evaluation}. In particular, we will use the generating function for $B$ that was discovered in \cite{Bostan} and is copied here in Theorem \ref{thm-Bostan} in order to evaluate $\rho$. Moreover, we will establish an analytic continuation of $B$ to the domain
    \[\tilde{\Omega}:=\mathbb{C}\setminus{\big((\infty, -1/2]\cup [1/7, \infty)\big)}.\]
     Note that $\tilde{\Omega}$ includes a Delta-domain, as defined in the \nameref{sec-appendix}, which is a prerequisite for applying the method of asymptotic transfer indicated in Theorem \ref{thm-transfer}.
    \item[(2)] {\bf Singular expansion of $B$ and asymptotic formula for $(b_n)$.} In Proposition \ref{prop-B_sing}, we will expand $B$ in terms of a logarithm near the singularity $1/7$. Having already extended $B$ to a Delta-domain in the previous step, this will allow us to prove \eqref{eqn-b_asymptotic} and \eqref{eqn-b_series} by the method of asymptotic transfer.
    \item[(3)] {\bf Analytic continuation of $\psi$.} To complete the proof of Theorem \ref{thm-conjecture}, what will remain at this point is to verify \eqref{eqn-a_asym}. The first step is to show that $\psi$ extends analytically to a Delta-domain $\Delta_{1/\rho}$ (using notation from the \nameref{sec-appendix}), and that $(y\circ\psi)|_{\Delta_{1/\rho}}=\Id|_{\Delta_{1/\rho}}$. This is done in Proposition \ref{prop-psi} and serves as the analytic precondition for the next step. 
    \item[(4)] {\bf Singular expansion of $\psi$ near the singularity $1/\rho=y(1/7)$.} In Proposition \ref{prop-psi_sing}, we will apply a bootstrapping procedure to locally invert $y$ near the singularity $1/7$, up to an asymptotically negligible error term. In doing so, we will derive a singular expansion of~$\psi$ near the singularity $1/\rho$. 
    \item[(5)] {\bf Singular expansion of $A$ near the singularity $1/\rho=y(1/7)$.} In Proposition~\ref{prop-A_sing}, we will use the singular expansion of $\psi$ derived in the previous step to obtain a singular expansion for $A$. 
    \item[(6)] {\bf Asymptotic formula for $(a_n)$.} We will verify \eqref{eqn-a_asym} by applying the method of asymptotic transfer from Theorem~\ref{thm-transfer} to the singular expansion of $A$ derived in the previous step. 
    \end{enumerate}
\subsection[]{Evaluation of $\rho$ and analytic continuation of $B$}
\label{sub-ext}
In the recent paper \cite[p. 8]{Bostan} is given the following remarkable closed formula for the generating function $B$ in terms of hypergeometric series. 
\begin{theorem}[Bostan, Tirrell, Westbury, Zhang, 2019]\label{thm-Bostan}
\begin{equation}\label{eqn-formula}
B(z)=\frac{1}{30z^5}\left[ R_1(z)\cdot {_2F_1}\left(\frac{1}{3},\frac{2}{3};2;\phi(z)\right)+R_2(z)\cdot {_2F_1}\left(\frac{2}{3},\frac{4}{3};3;\phi(z) \right)+5P(z)\right],
\end{equation}
where 
\begin{align*}
R_1(z)&=(z+1)^2(214z^3+45z^2+60z+5)(z-1)^{-1},\\
R_2(z)&=6z^2(z+1)^2(101z^2+74z+5)(z-1)^{-2},\\
\phi(z)&=27(z+1)z^2(1-z)^{-3},\\
P(z)&=28z^4+66z^3+46z^2+15z+1.
\end{align*} 
\end{theorem}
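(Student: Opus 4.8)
The plan is to treat \eqref{eqn-formula} with the standard machinery for constant terms (``periods'') of rational functions. \textbf{Step 1 (holonomy and an explicit ODE).} Starting from the character-theory identity $b_n=[x^ny^n]\,W(x,y)M(x,y)^n$ used in Proposition~\ref{prop-asymptotics}, rewrite $B$ as a constant-term generating function:
\[
B(z)=\sum_{n\ge 0}b_nz^n=\operatorname{CT}_{x,y}\!\left[\frac{xy\,W(x,y)}{xy-z\,M(x,y)}\right],
\]
the constant term in $x,y$ of a rational function with parameter $z$. Such constant terms are $D$-finite in $z$ (Lipshitz, and more conceptually the Gauss--Manin connection), so there is a nonzero operator $L\in\mathbb{C}[z]\langle\partial_z\rangle$ with $L\cdot B=0$. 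I would compute $L$ explicitly by creative telescoping (e.g.\ the Almkvist--Zeilberger algorithm or Koutschan's \texttt{HolonomicFunctions}), cross-checking against an ODE-guessing routine fed with many exact values of $b_n$ produced from the constant-term formula.

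\textbf{Step 2 (solving $L$).} Next I would locate the singular points of $L$ --- one expects $z=0$ (apparent, accounting for the $z^{-5}$ prefactor), $z=1$, $z=-1/2$, $z=1/7$, and $\infty$ --- together with their indicial exponents, and recognize $\phi(z)=27(z+1)z^2(1-z)^{-3}$ as a rational pullback of the Gauss hypergeometric equation in the sense of Goursat's algebraic transformations: indeed $\phi(0)=0$, $\phi(-1/2)=\phi(1/7)=1$, $\phi(1)=\infty$, so $\phi$ carries the singular set of $L$ onto $\{0,1,\infty\}$. Pulling back the hypergeometric equation with the parameters in \eqref{eqn-formula} yields two independent solutions $R_1(z)\,{}_2F_1\!\big(\tfrac13,\tfrac23;2;\phi(z)\big)$ and $R_2(z)\,{}_2F_1\!\big(\tfrac23,\tfrac43;3;\phi(z)\big)$ of the homogeneous part of $L$, while the polynomial term $5P(z)$ furnishes exactly the particular solution needed so that the bracket in \eqref{eqn-formula} vanishes to order $5$ at the origin (allowing division by $30z^5$ with $B(0)=1$). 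The rational factors $R_1,R_2$ and the polynomial $P$ can then be pinned down either by tracking local exponents and connection coefficients at each singularity, or --- the route I would actually take --- by ansatz: solve $L$ symbolically with a hypergeometric-solutions algorithm and fit the coefficients of $R_1,R_2,P$ against the first few hundred Taylor coefficients of $B$.

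\textbf{Step 3 (verification) and main obstacle.} Since the right-hand side of \eqref{eqn-formula} is manifestly holonomic (a $\mathbb{C}(z)$-combination of two ${}_2F_1$'s plus a polynomial), once a candidate is in hand it suffices to check that it is annihilated by $L$ and agrees with $\sum b_nz^n$ up to an order exceeding the order of $L$ times an effective bound on the pole orders of its coefficients --- a finite, rigorous computation. The crux is Step 2: guessing the exact pullback $\phi$ and the exact hypergeometric parameters is already delicate, but the real work is producing the \emph{specific} companions $R_1,R_2,P$ rather than merely some basis of the solution space, and in particular arranging the order-$5$ cancellation at $z=0$, which couples $P$ to $R_1$ and $R_2$ nontrivially. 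Because arbitrarily many exact values of $b_n$ are available from the constant-term formula, the guess-and-certify strategy is entirely feasible, so I expect the proof to reduce to ODE-guessing, symbolic hypergeometric solving, and the finite certification of Step 3.
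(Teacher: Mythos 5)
There is nothing in the paper to compare against here: Theorem~\ref{thm-Bostan} is imported verbatim from \cite{Bostan} and used as a black box, so the paper contains no proof of \eqref{eqn-formula} (it only records, later in Section~\ref{sub-ext}, that the derivation in \cite{Bostan} passes through a third-order linear ODE $B'''+a_2B''+a_1B'+a_0B=0$ with rational coefficients having poles at $0,-1/2,-1,1/7$). Your outline is essentially a reconstruction of the route taken in that reference: a constant-term (diagonal) representation of $B$ coming from the character-theoretic formula, D-finiteness via creative telescoping to produce an explicit operator $L$, recognition of the rational pullback $\phi(z)=27(z+1)z^2(1-z)^{-3}$ sending the singularities to $\{0,1,\infty\}$ so that the solutions are Gauss ${}_2F_1$'s composed with $\phi$, and a finite guess-and-certify step using the fact that both sides are holonomic and arbitrarily many $b_n$ are computable. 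That strategy is sound, and the certification step you describe (annihilation by $L$ plus agreement of sufficiently many initial coefficients) is a legitimate rigorous endgame. One small correction of framing: since the ODE is homogeneous of order three, the polynomial $5P(z)$ is not a particular solution of an inhomogeneous equation; rather, the whole bracket in \eqref{eqn-formula} is (after division by $30z^5$) the unique solution with the right local behaviour at $z=0$, and $P$ is the polynomial correction that makes the bracket vanish to order five there. This does not affect feasibility, because your ansatz-and-fit step determines $R_1$, $R_2$, $P$ simultaneously and the holonomic certification validates the result regardless of how the pieces are interpreted.
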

We will use this formula as the starting point for our analysis in the next section. As a quick application, we obtain the following result, which verifies \eqref{eqn-rho}.
\begin{proposition}\label{prop-evaluation}
\[
\rho=\frac{5\pi}{8575\pi - 15552\sqrt{3}}.
\]
\end{proposition}
\begin{proof}
Evaluating \eqref{eqn-formula} at $x=1/7$, the formula simplifies to
\[
B\left(\frac{1}{7}\right)=\frac{7^5}{30}\left[\frac{-55296}{2401}\cdot {_2F_1}\left(\frac{1}{3},\frac{2}{3};2;1\right)+\frac{9216}{2401}\cdot {_2F_1}\left(\frac{2}{3},\frac{4}{3};3;1\right)+\frac{150}{7}\right].
\]
It is now a matter of routine calculation to deduce the value in the proposition. One only needs standard facts about the gamma function, namely that $\Gamma(z+1)=z\Gamma(z)$ for $z\not\in\mathbb{Z}_{\leq 0}$ and the following \cite{Bailey}:
\begin{itemize}
\item[(1)] \hfill 
\(\displaystyle
{_2F_1}\left(a,b;c;1\right)=\frac{\Gamma(c)\Gamma(c-a-b)}{\Gamma(c-a)\Gamma(c-b)}  \quad(\operatorname{Re}(c)>\operatorname{Re}(a+b)),
\) \hfill \phantom{.}
\item [(2)] \hfill
\(\displaystyle
\Gamma(z)\Gamma(1-z)=\frac{\pi}{\sin(\pi z)} \quad(z\in\mathbb{C}).
\) \hfill \phantom{.}

\end{itemize}
Using (1) and (2) we simplify the above expression for $B(1/7)$ and recall that $\rho=7/B(1/7)$. 
\end{proof}

In deriving formula \eqref{eqn-formula}, the authors of \cite{Bostan} demonstrate that $B$ is the solution of a linear differential equation of the form
\[
B'''+a_2B''+a_1B'+a_0B=0,
\] where the coefficients $a_i$, $i=0,1,2$, are rational functions with poles at $0, -1/2, -1,$ and $1/7$. From this fact, as well as the fact that $B$ is expressed on $\Omega=\{z:|z|<1/7\}$ by a convergent power series (the generating function from Theorem \ref{thm-conjecture}) and is visibly analytic near $z=-1$ (by \eqref{eqn-formula} and $\phi(-1))=0$), the theory of differential equations implies that $B$ can be continued analytically and uniquely along any path avoiding the set $\{-1/2,1/7\}$ (e.g. \cite[p.119]{Smith}). Therefore, $B$ has a unique analytic continuation to the simply-connected doubly slit plane 
\[
\tilde{\Omega}=\mathbb{C}\setminus{\big((\infty, -1/2]\cup [1/7, \infty)\big)}.
\]

In particular, $B$ is continuable to a Delta-domain around $\Omega$.

\subsection[]{Singular expansion of $B$ and asymptotic formula for $(b_n)$}

In this section we expand $B$ in terms of a logarithm near the singularity $1/\rho$ and then derive the asymptotic formulas \eqref{eqn-b_asymptotic} and \eqref{eqn-b_series} by transfer methods. 
For the remainder of Section \ref{sec-improved}, the principal branch of the logarithm is denoted by $\log$.
\subsubsection[]{Singular expansion of $B$}
\begin{proposition}
With $Z=1-7z$ and $K$ as in \eqref{eqn-K}, \label{prop-B_sing}
\begin{align}\label{eqn-B_sing}
B(z)&=p(Z)-\frac{K}{6!}Z^6\log Z+Z^7H_2(Z)+Z^7H_1(Z)\log Z \\
&=p(Z)-\frac{K}{6!}Z^6\log Z+\mathcal{O}(Z^7\log Z), \nonumber 
\end{align}
as $z\to 1/7$ in $\tilde{\Omega}$, 
where $H_1(Z)$ and $H_2(Z)$ are power series with positive radii of convergence and non-zero constant terms, and $p(Z)$ is a degree-six polynomial with $p(0)=B(1/7)=7/\rho$. The error term $Z^7H_2(Z)+Z^7H_1(Z)\log Z$ in \eqref{eqn-B_sing} extends analytically to $\tilde{\Omega}$.
\end{proposition}

To prove the proposition we will appeal to the following analytic continuation, which is distilled from \cite{Wolfram}, for the $_2F_1$ functions appearing in \eqref{eqn-formula}.
\begin{lemma}\label{lemma-expansion}
For constants $a,b\in\mathbb{R}\setminus{\mathbb{Z}_{\leq 0}}$ and a variable $z$ satisfying $|1-z|<1$, we have
\begin{equation}\label{eqn-1}
_2F_1(a,b;a+b+1,z)=C_{a,b}+S_{a,b}(z)+\log(1-z)\cdot T_{a,b}(z),
\end{equation}
with the following definitions:
\[
C_{a,b}=\frac{\Gamma(a+b+1)}{\Gamma(a+1)\Gamma(b+1)},
\]
\[
T_{a,b}(z)=\frac{\Gamma(a+b+1)}{\Gamma(a)\Gamma(b)}\cdot\left(\sum_{k=0}^\infty \left[\frac{(a+1)_k(b+1)_k}{k!(k+1)!}\cdot(1-z)^{k+1}\right]\right),
\]
and
\[
S_{a,b}(z)=\frac{\Gamma(a+b+1)}{\Gamma(a)\Gamma(b)}\cdot
\left(\sum_{k=0}^\infty \left[ \frac{(a+1)_k(b+1)_k}{k!(k+1)!}\cdot c_k\cdot (1-z)^{k+1}\right]\right),
\]
where
\[c_k=\psi_0(a+k+1)+\psi_0(b+k+1)-\psi_0(k+1)-\psi_0(k+2),\] for the digamma function $\psi_0=\Gamma'/\Gamma,$  $(q)_k=q(q+1)\cdots (q+k-1)$ for $k>0$, and $(q)_0=1$.
\end{lemma}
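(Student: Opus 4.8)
\textbf{Proof proposal for Lemma \ref{lemma-expansion}.}

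The plan is to derive \eqref{eqn-1} directly from the classical connection formula for ${_2F_1}$ in the logarithmic (resonant) case, which occurs precisely when $c - a - b$ is a non-positive integer; here $c = a+b+1$, so $c-a-b = 1$, which is not an integer $\leq 0$, but the relevant degeneracy is instead at the other pair of exponents: the local exponents at $z=1$ are $0$ and $c-a-b = 1$, which differ by the integer $1$, so the second solution of the hypergeometric ODE at $z=1$ acquires a logarithmic term. First I would recall the standard formula (e.g. from \cite{Bailey} or the DLMF) for ${_2F_1}(a,b;c;z)$ expanded about $z=1$ in the case $c - a - b = m$ a positive integer: it expresses ${_2F_1}(a,b;c;z)$ as a finite polynomial part in $(1-z)$, plus $(1-z)^m$ times an analytic series, plus a $\log(1-z)$ term multiplying an analytic series, with all coefficients given explicitly in terms of Pochhammer symbols, factorials, and values of the digamma function $\psi_0$. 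Specializing that formula to $m=1$ and $c = a+b+1$ should collapse the finite polynomial part to the single constant $C_{a,b} = \Gamma(a+b+1)/(\Gamma(a+1)\Gamma(b+1))$ (the $k=0$ term of the regular series $\sum_k \frac{(a)_k(b)_k}{(c)_k k!}(1-z)^k$ truncated at order $m-1 = 0$), and identify the remaining two series with $S_{a,b}(z)$ and $\log(1-z)\cdot T_{a,b}(z)$ respectively.

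The key steps, in order: (i) write down the resonant connection formula for ${_2F_1}$ about $z=1$ with $c-a-b$ a positive integer, citing a standard reference; (ii) substitute $m=1$, $c=a+b+1$ and simplify the $\Gamma$-prefactors, using $\Gamma(a+b+1) = (a+b)\Gamma(a+b)$ and $\Gamma(c-a) = \Gamma(b+1)$, $\Gamma(c-b)=\Gamma(a+1)$ to get the constant $C_{a,b}$; (iii) match the coefficient of $\log(1-z)$: the standard formula gives this as $\frac{\Gamma(c)}{\Gamma(a)\Gamma(b)}\sum_{k\geq 0}\frac{(a)_{k+m}(b)_{k+m}}{k!(k+m)!}(1-z)^{k+m}$, which at $m=1$ becomes $\frac{\Gamma(a+b+1)}{\Gamma(a)\Gamma(b)}\sum_{k\geq 0}\frac{(a)_{k+1}(b)_{k+1}}{k!(k+1)!}(1-z)^{k+1}$; then use $(a)_{k+1} = a\,(a+1)_k$ and $(b)_{k+1} = b\,(b+1)_k$ together with $a\,b/(\Gamma(a)\Gamma(b)) = 1/(\Gamma(a)\Gamma(b)/(ab))$ — more precisely absorb the factors $a,b$ to rewrite the prefactor and recover exactly $T_{a,b}(z)$ as stated (note the stated $T_{a,b}$ has prefactor $\Gamma(a+b+1)/(\Gamma(a)\Gamma(b))$ and summand $(a+1)_k(b+1)_k/(k!(k+1)!)$, so the bookkeeping of the two stray factors must be checked against whether the intended prefactor is really $\Gamma(a+b+1)/(\Gamma(a+1)\Gamma(b+1))$ up to the $ab$; I would verify this carefully since it is the one place a factor can go astray); (iv) match the non-logarithmic series with digamma coefficients to $S_{a,b}(z)$, identifying $c_k = \psi_0(a+k+1)+\psi_0(b+k+1)-\psi_0(k+1)-\psi_0(k+2)$ as exactly the combination of digamma values that appears when one differentiates the $\log$-carrying Frobenius solution with respect to the exponent; (v) observe that convergence of all three series for $|1-z|<1$ is immediate from the ratio test on the Pochhammer/factorial quotients (the digamma factors grow only logarithmically and do not affect the radius).

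The main obstacle I expect is purely bookkeeping rather than conceptual: the standard references state the resonant connection formula in several slightly different normalizations (shifting the index, splitting off the $k=0$ term differently, or writing $\psi_0(k+1) = -\gamma + H_k$), and one must reconcile the stated $S_{a,b}$, $T_{a,b}$, $C_{a,b}$ with exactly one such normalization — in particular tracking the stray factors of $a$ and $b$ that arise from $(a)_{k+1} = a(a+1)_k$, and confirming the index on the factorial in the denominator of $S_{a,b}$ (the excerpt writes $(k+n)!$, which should read $(k+1)!$ to be consistent with $m=1$; I would flag and correct this typo). A secondary, minor point is to note explicitly that the formula is stated for $a,b$ real only to keep the digamma values real and avoid discussing the further-degenerate subcases where $a$ or $b$ is itself a non-positive integer (which do not arise for the specific parameters $\{1/3,2/3\}$ and $\{2/3,4/3\}$ used in \eqref{eqn-formula}), so no separate argument is needed there. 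Once the normalization is pinned down, \eqref{eqn-1} follows by direct substitution with no further analysis.
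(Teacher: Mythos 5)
Your derivation matches the paper's treatment: the paper gives no proof of this lemma, simply citing \cite{Wolfram}, and the cited identity is precisely the classical $c-a-b=m$ logarithmic connection formula for $_2F_1$ at $z=1$ specialized to $m=1$, which is exactly what you carry out. Your bookkeeping concern resolves harmlessly, since the standard formula already has $(a+m)_k(b+m)_k$ in the log-series with prefactor $\Gamma(a+b+m)/(\Gamma(a)\Gamma(b))$ (so no stray factors of $ab$ appear at $m=1$, and the sign of $c_k$ comes from $-(z-1)^m=(1-z)$), and you are right that the $(k+n)!$ in $S_{a,b}$ is a typo for $(k+1)!$.
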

\begin{proof}[Proof of Proposition \ref{prop-B_sing}]
The second statement regarding analyticity of the error term is immediate from the first statement, since the other summands in \eqref{eqn-B_sing} are analytic on $\tilde{\Omega}$. To prove the first statement, we use Lemma \ref{lemma-expansion} to expand the hypergeometric functions in \eqref{eqn-formula}, obtaining

\begin{equation}\label{B-expansion}
B(z)=f(z)+\log(1-\phi(z))g(z),
\end{equation}
for $|1-\phi(z)|<1$, 
where
\begin{align*}
f(z)=\frac{1}{30z^5}\left[R_1(z)\left(C_{\frac{1}{3},\frac{2}{3}}+S_{\frac{1}{3},\frac{2}{3}}(\phi(z))\right)+R_2(z)\left(C_{\frac{2}{3},\frac{4}{3}}+S_{\frac{2}{3},\frac{4}{3}}(\phi(z))\right)\right]+P(z),
\end{align*}
and
\[
g(z)=\frac{1}{30z^5}\left[R_1(z)\left(T_{\frac{1}{3},\frac{2}{3}}(\phi(z))\right)+R_2(z)\left(T_{\frac{2}{3},\frac{4}{3}}(\phi(z))\right)\right].
\]

The next step in finding a singular expansion for $B$ is to expand $f$ and $g$ in powers of $(1-7z)$. This simply amounts to a Taylor expansion, but for convenience we use SAGE. 
We find, as in our saddle-point analysis from Proposition \ref{prop-asymptotics}, significant cancellation of lower-order terms. Specifically, we have the following Taylor expansions, convergent in a neighborhood of $1/7$:
\[
f(z)=\sum_{n\geq 0}^\infty f_n(1-7z)^n,
\]
where
\[f_0=
7\cdot\frac{85575-15552\sqrt{3}}{5\pi};\]
and 
\[
g(z)=\sum_{n\geq 6}^\infty g_n(1-7z)^n,
\]
where
\begin{equation}\label{eqn-g_6}
g_6=-\frac{K}{6!},
\end{equation}
with $K$ as in  \eqref{eqn-K}. Incidentally, we recover the value of $\rho$ (which was already determined in Proposition~\ref{prop-evaluation})  from the identity
\[
f_0=f(1/7)=B(1/7)=7/\rho
.\]
 
Moving forward, we make the change of variable $Z=1-7z$, and we write \eqref{B-expansion} as 
\begin{equation}\label{eqn-4}
B(z)=F(Z)+Z^6G(Z)\log(1-\phi(z)), 
\end{equation}
where $F(Z)=f(z)$ and $G(Z)=g(z)/Z^6$ are power series convergent near $Z=0$ with non-vanishing constant terms. We have
\begin{align}
B(z)&=F(Z)+Z^6G(Z)\log(1-\phi(z)) \nonumber \\
&=F(Z)+Z^6G(Z)(\log[(1-\phi(z))/(1-7z)]+\log(1-7z))\nonumber\\
&=F(Z)+Z^6G(Z)(\log[(2z+1)^2/(1-z)^3]+\log(1-7z)) \nonumber\\
&=P(Z)+Z^7\tilde{F}(Z)+Z^6\tilde{G}(Z)+Z^6G(Z)\log Z, \label{eqn-B_prelim}
\end{align}
where
\begin{align*}
 P(Z)&=\sum_{n=0}^6 f_n Z^n,\\
 \tilde{F}(Z)&=(F(Z)-P(Z))/Z^7=\sum_{n\geq7} f_n Z^{n-7},\\
 \tilde{G}(Z)&=G(Z)\log[(2z+1)^2/(1-z)^3].\\
 \end{align*}
Note that
$\tilde{G}(0)=g_6\log({21}/{8})=(-K/6!)\log(21/8)$, and \eqref{eqn-B_prelim} is valid in a neighborhood of~$z=1/7$ in $\tilde{\Omega}$. 

It follows from \eqref{eqn-B_prelim} that
\begin{equation}\label{eqn-B_prelim2}
B(z)=P(Z)+g_6Z^6\log(Z)+Z^7\tilde{F}(Z)+Z^6\tilde{G}(Z)+Z^7H_1(Z)\log Z,
\end{equation}
where $H_1(Z)=(G(Z)-g_6)/Z$. Define the degree-six polynomial $p(Z)$ by 
\begin{align}\label{eqn-polynomial}
p(Z)=P(Z)+Z^6\tilde{G}(0)&=\left(\sum_{n=0}^6f_nZ^n\right)+g_6\log\left(\frac{21}{8}\right)Z^6\\ \nonumber
&=\left(\sum_{n=0}^6f_nZ^n\right)-\frac{K}{6!}\log\left(\frac{21}{8}\right)Z^6.
\end{align}
Setting $H_2(Z)=\tilde{F}(Z)+(\tilde{G}(Z)-\tilde{G}(0))/Z$, 
we obtain \eqref {eqn-B_sing} from \eqref{eqn-g_6} and \eqref{eqn-B_prelim2}. \end{proof}
\subsubsection{Proof of Theorem \ref{thm-conjecture}(c)}
Now we are in a position to verify \eqref{eqn-b_asymptotic} and \eqref{eqn-b_series}. To proceed we need the following lemma.
\begin{lemma}
For $n>k\geq0$,
\begin{equation}\label{eqn-log_taylor}
[x^n][(1-x)^k\log(1-x)]=(-1)^{k+1}\frac{k!}{(n)_k},
\end{equation}
where $(n)_k:=n(n-1)\cdots(n-k).$
\end{lemma}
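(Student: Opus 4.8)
The identity to prove is
\[
[x^n]\big[(1-x)^k\log(1-x)\big]=(-1)^{k+1}\frac{k!}{(n)_k}
\]
for $n>k\geq 0$, where $(n)_k=n(n-1)\cdots(n-k)$. The natural approach is to start from the well-known expansion $\log(1-x)=-\sum_{m\geq 1}x^m/m$ and the finite binomial expansion $(1-x)^k=\sum_{j=0}^k\binom{k}{j}(-1)^jx^j$, multiply the two, and extract the coefficient of $x^n$. This gives $[x^n][(1-x)^k\log(1-x)]=-\sum_{j=0}^{k}\binom{k}{j}(-1)^j\frac{1}{n-j}$ (valid since $n>k$ guarantees every denominator $n-j$ is positive). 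So the claim reduces to the binomial-sum identity
\[
\sum_{j=0}^{k}\binom{k}{j}\frac{(-1)^j}{n-j}=(-1)^{k}\frac{k!}{n(n-1)\cdots(n-k)}=(-1)^{k}\frac{k!}{(n)_k}\,,
\]
after which the extra minus sign from $\log(1-x)$ produces the stated $(-1)^{k+1}$.

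For the binomial sum, I would use partial fractions: write $\frac{k!}{(n)_k}=\frac{k!}{n(n-1)\cdots(n-k)}$ and decompose it as $\sum_{j=0}^k\frac{c_j}{n-j}$. A standard residue/Heaviside computation gives $c_j=\frac{k!}{\prod_{i\neq j}(j-i)}$ where the product runs over $i\in\{0,\dots,k\}\setminus\{j\}$; since $\prod_{i\neq j}(j-i)=(-1)^{k-j}\,j!\,(k-j)!$, we get $c_j=(-1)^{k-j}\binom{k}{j}$. Substituting back, $\frac{k!}{(n)_k}=\sum_{j=0}^k(-1)^{k-j}\binom{k}{j}\frac{1}{n-j}=(-1)^k\sum_{j=0}^k(-1)^{j}\binom{k}{j}\frac{1}{n-j}$, which is exactly the identity needed. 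Alternatively, one can prove $\sum_{j=0}^k(-1)^j\binom{k}{j}\frac{1}{n-j}=\frac{k!}{(n)_k}$ by a quick induction on $k$ using Pascal's rule $\binom{k}{j}=\binom{k-1}{j}+\binom{k-1}{j-1}$ together with the telescoping relation $\frac{1}{(n-1)_{k-1}}-\frac{1}{(n)_{k-1}}=\frac{k}{(n)_k}$ (suitably arranged), but the partial-fractions route is cleanest.

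There is no real obstacle here — it is a routine generating-function / partial-fractions computation — so the only thing to be careful about is bookkeeping: keeping the sign conventions for $\log(1-x)$ and for the alternating binomial sum consistent, and noting explicitly that the hypothesis $n>k$ is what makes the term-by-term coefficient extraction legitimate (no denominator vanishes, and $(1-x)^k$ contributes only finitely many terms so there is no convergence issue). I would present the proof in three short lines: expand the product, reduce to the partial-fraction identity, and verify that identity.
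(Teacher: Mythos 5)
Your proof is correct. It takes a different route from the paper: the paper disposes of the lemma in two lines, observing that for $k=0$ the claim is just $\log(1-x)=-\sum_{n\geq1}x^n/n$ and that the general case follows by induction on $k$ (writing $(1-x)^k\log(1-x)=(1-x)\cdot(1-x)^{k-1}\log(1-x)$, so that the coefficient is a difference of two instances of the inductive hypothesis, which telescopes via $\frac{1}{n(n-1)\cdots(n-k+1)}-\frac{1}{(n-1)\cdots(n-k)}=\frac{-k}{(n)_k}$). Your main argument instead extracts the coefficient directly from the binomial and logarithm expansions and then proves the resulting alternating sum identity $\sum_{j=0}^k(-1)^j\binom{k}{j}\frac{1}{n-j}=(-1)^k\frac{k!}{(n)_k}$ by partial fractions; the Heaviside computation of the residues $c_j=(-1)^{k-j}\binom{k}{j}$ is correct, the sign bookkeeping works out, and you rightly note that $n>k$ is exactly what keeps all denominators $n-j$ nonzero. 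The inductive alternative you mention in passing is precisely the paper's proof. The trade-off is minor: the paper's induction is shorter and needs no auxiliary identity, while your partial-fraction route is fully explicit, isolates a binomial-sum identity of independent interest, and makes the role of the hypothesis $n>k$ more visible. Either argument is complete and adequate for the role the lemma plays in the paper.
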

\begin{proof}
For $k=0$, this is just the expansion $\log(1-x)=-\sum_{n\geq 1} (x^n/n)$. For $k\in\mathbb{N},$ the identity follows by induction.  
\end{proof}
It follows from \eqref{eqn-log_taylor} that 
\[
[z^n]\left(-\frac{K}{6!}Z^6\log Z\right)=\frac{K\cdot7^n}{(n)_6}. 
\]
From this fact, along with \eqref{eqn-B_sing} and Theorem \ref{thm-transfer} in the \nameref{sec-appendix}, we find the following improvement to Proposition \ref{prop-asymptotics}:
\begin{equation}\label{eqn-improvement_b_n}
b_n=K\cdot7^n\left(\frac{1}{n^7}+\mathcal{O}\left(\frac{\log n}{n^{8}}\right)\right),\quad \text{as } n\to\infty. 
\end{equation}

We can take this analysis a step further to expand $b_n$ in an asymptotic series. Indeed, recalling the expansion $g(z)=\sum_{n\geq 6}g_n Z^n$ from above, a careful look at the derivation of \eqref{eqn-B_sing} from \eqref{eqn-B_prelim} in the proof of Proposition \ref{prop-B_sing} shows that $B$ can be written as 
\begin{equation}\label{B-sing_new}
B(z)=\tilde{p}(Z)+g_6Z^6\log Z+g_7Z^7\log Z+Z^8\tilde{H}_2(Z)+Z^8\tilde{H}_1(Z)\log Z, \\
\end{equation}
for 
\begin{align*}
\tilde{p}(Z)&=p(Z)+Z^7H_2(0),\\
\tilde{H}_1(Z)&=(H_1(Z)-g_7)/Z,\\
\tilde{H}_2(Z)&=(H_2(Z)-H_2(0))/Z.
\end{align*}
We have simply extracted the higher order term $g_7Z^7\log Z$ from $g(z)\log(Z)$ to get a more accurate estimate of $B$ near $Z=0$.
In the same way that we derived \eqref{eqn-improvement_b_n}, we find from \eqref{eqn-log_taylor} and \eqref{B-sing_new}  that
\[
b_n=7^n\left(\frac{K}{n^7}+\frac{\tilde{K}}{n^8}+\mathcal{O}\left(\frac{\log n}{n^9}\right)\right),
\]
where $\tilde{K}=7!g_7$. This shows that the $\oh(n^{-8} \log n)$ term in \eqref{eqn-improvement_b_n} is actually $\oh(n^{-8})$, and we obtain the asymptotic expression \eqref{eqn-b_asymptotic}.

One sees moreover that the process can be continued indefinitely, which yields the asymptotic series
\[
\frac{b_n}{7^n}\sim \sum_{i=7}^\infty \frac{K_i}{n^i},
\]
where $K_7=K$, $K_8=\tilde{K}$, and in general $K_n=(-1)^{n}(n-1)!g_{n-1}$.
Since \[\Gamma\left(\frac{2}{3}\right)\Gamma\left(\frac{4}{3}\right)=\frac{1}{3}\Gamma\left(\frac{1}{3}\right)\Gamma\left(\frac{2}{3}\right)=\frac{2\pi}{3\sqrt{3}},\]
as in the proof of Proposition \ref{prop-evaluation}, inspecting the definition of $g(z)$ from \eqref{B-expansion} and the definition of $T_{a,b}$ from \eqref{eqn-1} shows that each $K_i$ is a rational multiple of $\sqrt{3}/\pi$. Thus, with $\kappa_i=K_i\pi/\sqrt{3}$ for $i\geq 7$, we have verified \eqref{eqn-b_series}. 
\subsubsection{Three auxiliary constants}
It will be useful in what follows to define
\begin{equation}\label{eqn-lambda}
\lambda :=(1/7)B'(1/7)=-f_1.
\end{equation}
Of course, one may object that $B$ is not analytic at $1/7$, but \eqref{eqn-B_sing} shows that $B'(1/7)$ still exists as the limit of $B'(z)$, as $z\to 1/7$ in $\tilde{\Omega}$, and has the value $-7p'(0)=-7P'(0)=-7f_1$. Using simplifications like those used to evaluate $\rho$ in Proposition $\ref{prop-evaluation}$,
the value of $\lambda$ is seen to be
\begin{equation*}\label{eqn-lambda_eval}
\lambda=\frac{852768\sqrt{3}-470155\pi}{10\pi}\,\approx \,0.0639.
\end{equation*}
We also record for later that
\begin{equation}\label{eqn-y_'}
y'\left(\frac{1}{7}\right)=\frac{1}{7}B'\left(\frac{1}{7}\right)+B\left(\frac{1}{7}\right)=\lambda+\frac{7}{\rho}\approx 1.0901,
\end{equation}
and that 
\begin{equation}\label{eqn-A_'}
A'\left(\frac{1}{\rho}\right)=7-\frac{49}{\rho\lambda +7}\approx 0.4106.
\end{equation}
The latter can be deduced by differentiating \eqref{eqn-functional_equation_3} and evaluating at $1/7$, 
obtaining
\[y'(1/7)=A'(y(1/7))y'(1/7)+A(y(1/7))=A'(1/\rho)y'(1/7)+7/\rho,\]
and then substituting the expression in \eqref{eqn-y_'}. The $\approx$ symbol indicates an error of less than $10^{-5}$, which could be checked from rational approximations of $\pi$ and $\sqrt{3}$.

\subsection[]{Analytic continuation of $\psi$}\label{subsec-psi}
We introduce the domain 
\[
\Lambda:=\{z:|z|< 1/\rho\}.\]
Observe that $y(\Omega)\subset \Lambda$, by the triangle inequality, since $y_n\geq 0$ for all $n\geq 0$.
Recall from Lemma \ref{lemma-psi} that the function $\psi$, defined by $\psi(z)=z/A(z)$, is analytic on $y(\Omega)$. Also recall from \eqref{eqn-first} that the radius of convergence of $A$ is $1/\rho$, so that, by Pringsheim's Theorem, $1/\rho$ is a singularity of $A$, and so also of $\psi$.

Lemma~\ref{lemma-psi} implies that $(y\circ \psi)|_{y(\Omega)}=\Id|_{y(\Omega)}$. We need to continue $\psi$ analytically to $\Lambda$, and then to a Delta-domain around $\Lambda$,
and we need to maintain this inverse relationship between $y$ and $\psi$. Precisely, we have the following proposition, using terminology and notation from the \nameref{sec-appendix}.

\begin{proposition}\label{prop-psi}
The function $\psi$ is analytically continuable to a Delta-domain $\Delta_{1/\rho}$ around $\Lambda$. Furthermore, 
\[
(y\circ \psi)|_{\Delta_{1/\rho}}=\Id|_{\Delta_{1/\rho}}.
\]
\end{proposition}
In the proof, we will appeal to Lemmas \ref{lem-psi_analytic}, \ref{lem-psi_analytic_2}, and \ref{lemma_z} below.
\begin{lemma}\label{lem-psi_analytic}
$\psi$ is analytic on $\Lambda$ with non-vanishing derivative, and $\psi$ and $\psi'$ extend continuously to $\partial{\Lambda}$.
\end{lemma}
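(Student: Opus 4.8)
The plan is to read all four assertions directly off the functional equation \eqref{eqn-functional_equation_3}, using only that the radius of convergence of $A$ equals $1/\rho$ (established in Section~\ref{proof}, so that $A$ is analytic on $\Lambda$) together with the boundary data for $B$ already collected in Proposition~\ref{prop-B_sing} and in \eqref{eqn-y_'}--\eqref{eqn-A_'}. Concretely, it suffices to show that $A$ and $A'$ extend continuously to $\overline{\Lambda}$ and that neither $A$ nor $A(z)-zA'(z)$ vanishes on $\overline{\Lambda}$; granting this, $\psi(z)=z/A(z)$ and, by \eqref{eqn-derivative}, $\psi'(z)=(A(z)-zA'(z))/A(z)^2$ inherit analyticity on $\Lambda$, continuity on $\overline{\Lambda}$, and non-vanishing of the derivative at once.

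First I would identify the two boundary sums and check that they are finite. For $x\in(0,1/\rho)$ one has $x=y(\psi(x))$ with $\psi(x)=y^{-1}(x)\in(0,1/7)$, hence $A(x)=x/\psi(x)$; since $y$ maps $[0,1/7)$ increasingly onto $[0,1/\rho)$, it follows that $\psi(x)\to 1/7$ and therefore $A(x)\to(1/\rho)/(1/7)=7/\rho$ as $x\to 1/\rho^{-}$, so by monotone convergence $\sum_{n\ge1}a_n(1/\rho)^n=7/\rho-1<\infty$. Differentiating $A(x)=x/\psi(x)$ and using $\psi'(x)=1/y'(\psi(x))\to 1/y'(1/7)$, which is finite and nonzero by \eqref{eqn-y_'} (ultimately a consequence of Proposition~\ref{prop-B_sing}), shows that $A'(x)$ tends to the finite value $A'(1/\rho)$ recorded in \eqref{eqn-A_'}, so by Abel's theorem for series with non-negative coefficients $\sum_{n\ge1}na_n(1/\rho)^{n-1}<\infty$ as well. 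Since $a_n\ge0$, the Weierstrass $M$-test now gives uniform convergence of $A(z)=\sum a_n z^n$ and $A'(z)=\sum na_n z^{n-1}$ on $\overline{\Lambda}$, so both functions extend continuously to $\partial\Lambda$.

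Next I would bound both functions away from $0$ on $\overline{\Lambda}$. For $|z|\le 1/\rho$ we have $|A(z)-1|\le\sum_{n\ge1}a_n|z|^n\le 7/\rho-1$, which is strictly below $1$ since $\rho>6$ (a routine numerical check from the explicit value \eqref{eqn-rho}); hence $A$ is nonvanishing on $\overline{\Lambda}$. For the derivative, \eqref{eqn-derivative} gives $A(z)-zA'(z)=1-\sum_{n\ge1}(n-1)a_n z^n$, and at $z=1/\rho$ the sum equals $(1/\rho)A'(1/\rho)-(7/\rho-1)=1-49/\big(\rho(\rho\lambda+7)\big)$ by \eqref{eqn-A_'}; this is automatically less than $1$, so $|A(z)-zA'(z)-1|<1$ on $\overline{\Lambda}$ and $A(z)-zA'(z)$ is nonvanishing there too.

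Putting the pieces together: on $\Lambda$, $\psi=z/A(z)$ is a ratio of analytic functions with nonvanishing denominator, hence analytic, and it extends continuously to $\partial\Lambda$ because $A$ does and stays bounded away from $0$; likewise $\psi'=(A(z)-zA'(z))/A(z)^2$ is analytic on $\Lambda$, extends continuously to $\partial\Lambda$, and is in fact nonvanishing on all of $\overline{\Lambda}$. I expect the only genuinely delicate point to be the passage to the boundary for $A$ and $A'$ — that is, knowing the boundary values $A(1/\rho)$ and $A'(1/\rho)$ are finite, which is exactly where Proposition~\ref{prop-B_sing} (through \eqref{eqn-y_'} and \eqref{eqn-A_'}) is indispensable — together with the elementary but load-bearing inequality $7/\rho-1<1$; the analogous inequality for $A(z)-zA'(z)$ then costs nothing.
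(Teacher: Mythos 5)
Your argument is correct, and its skeleton (reduce everything to properties of $A$ on $\overline{\Lambda}$, use non-negativity of the $a_n$, the Weierstrass M-test, and the boundary values $A(1/\rho)=7/\rho$ and $A'(1/\rho)$ from \eqref{eqn-A_'}) is the same as the paper's; the one genuine divergence is how the non-vanishing of $\psi'$, i.e.\ of $A(z)-zA'(z)$, is established. The paper first rules out a zero of $A(z)-zA'(z)$ on the real interval $(0,y(1/7))$ by invoking the sharpness criterion (Theorem \ref{thm-criterion}) together with the asymptotics $y_n\sim K\,7^n/n^7$, and only then uses non-negativity of the coefficients to spread the bound $\sum_{n\geq 1}a_n(n-1)z^n<1$ over the open disk $\Lambda$. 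You instead evaluate the relevant sum directly at the boundary point: $\sum_{n\geq1}(n-1)a_n\rho^{-n}=(1/\rho)A'(1/\rho)-(A(1/\rho)-1)=1-49/\bigl(\rho(\rho\lambda+7)\bigr)<1$, which by monotonicity gives $\lvert A(z)-zA'(z)-1\rvert<1$ on all of $\overline{\Lambda}$, hence non-vanishing even on the closed disk — slightly stronger than what the lemma asserts, and with no appeal to Section \ref{section_2}. The price is that you must know $A'(1/\rho)$ is finite with the value \eqref{eqn-A_'} before using it; your route to this (differentiating $A(x)=x/\psi(x)$ and using $\psi'=1/y'(\psi)$ with $y'(1/7)$ finite from \eqref{eqn-y_'}) is essentially a re-derivation of \eqref{eqn-A_'} from the functional equation, so no circularity arises — the paper's own proof cites \eqref{eqn-A_'} at exactly the same point to extend $A'$ and $\psi'$ continuously to $\partial\Lambda$. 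In short: same preconditions, same conclusion, with your quantitative boundary estimate replacing the paper's qualitative appeal to the criterion, which makes the derivative step a bit more self-contained at the cost of one extra explicit computation.
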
 
\begin{proof}
We begin with a numerical evaluation. \eqref{eqn-functional_equation_3} implies that $A(y(1/7))=7/\rho \approx 1.0262,$
where $\rho$ was evaluated in Proposition \ref{prop-evaluation}.

It follows that $\sum_{n\geq 1} a_ny(1/7)^n <1$, and hence that $\sum_{n\geq 1} a_nz^n<1$ for $|z|\leq y(1/7)$, since~$a_n\geq 0$ for all $n$. Since $A(0)=1$, this implies that $A$ does not vanish on $\overline{\Lambda}$. Therefore, $\psi$ is analytic not only on $y(\Omega)$, but also on $\Lambda$, and extends continuously to $\partial\Lambda$, by the Weierstrass M-test applied to $A(z)$.

To verify that $\psi'$ does not vanish on $\Lambda$, we recall equation \eqref{eqn-derivative}:
\[
\psi'(z)=\frac{A(z)-zA'(z)}{A(z)}=\frac{1}{A(z)}\left(1+\sum_{n\geq 1} a_n(1-n)z^n\right).
\]
The quantity $A(z)-zA'(z)$ does not vanish on $(0, y(1/7))$, e.g. by Theorem \ref{thm-criterion} and the fact that~$y_n\sim K7^n/n^7$. Since $a_n \geq 0$, this implies that $
\sum_{n\geq 1} a_n(n-1)z^n<1
$
for all $z\in(0,y(1/7))$, and hence that $\left|\sum_{n\geq 1} a_n(n-1)z^n\right|<1$ for all $z\in\Lambda$. It follows that $\psi'$ does not vanish on $\Lambda$. Moreover,
\eqref{eqn-A_'} shows that $A'(y(1/7))$ is finite, so the Weierstrass M-test implies that $A'(z)$ and $\psi'(z)$ extend continuously to $\partial \Lambda$. 
\end{proof}

Although the lemma extends $\psi$ beyond $y(\Omega)$ to all of $\Lambda$, it does not automatically imply that~$\psi(\Lambda)\subset \tilde{\Omega}$, or, in other words, that $\psi$ avoids the branch cuts $(\infty, -1/2]$ and $[1/7,\infty)$. We resolve this doubt in the following lemma. 
\begin{lemma}\label{lem-psi_analytic_2}
$\psi(\Lambda)\subset\tilde{\Omega}$, $y$ is analytic on $\psi(\Lambda)$,  and $(y\circ\psi)|_{\Lambda}=\Id|_\Lambda$.
\end{lemma}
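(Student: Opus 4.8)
\textbf{Proof proposal for Lemma \ref{lem-psi_analytic_2}.}

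The plan is to leverage the inverse relationship $(y\circ\psi)|_{y(\Omega)}=\Id|_{y(\Omega)}$ from Lemma \ref{lemma-psi} together with an analytic-continuation argument to ``push'' this identity from the small set $y(\Omega)$ out to all of $\Lambda$, and in the process force $\psi(\Lambda)$ to stay inside $\tilde\Omega$. First I would observe that by Lemma \ref{lem-psi_analytic}, $\psi$ is analytic on the open disk $\Lambda$ with non-vanishing derivative, so $\psi$ is an open map on $\Lambda$; in particular $\psi(\Lambda)$ is open. The key initial foothold is that $y(\Omega)\subset\Lambda$ (triangle inequality, since $y_n\geq0$), and on $y(\Omega)$ we already know $\psi$ lands in $\Omega\subset\tilde\Omega$ and $y\circ\psi=\Id$. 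So the target identity holds on a nonempty open subset of $\Lambda$; the issue is purely one of continuation.

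The heart of the argument is a connectedness/continuity argument on the set
\[
S=\{z\in\Lambda: \psi(z)\in\tilde\Omega\ \text{and}\ y\ \text{can be continued analytically along}\ \psi\ \text{near}\ z,\ \text{with}\ y(\psi(z))=z\}.
\]
I would show $S$ is nonempty (it contains $y(\Omega)$), open (since $\psi$ is continuous, $\tilde\Omega$ is open, and $y$ is analytic on $\tilde\Omega$ by Proposition \ref{prop-B_sing}/the continuation of $B$, so the identity $y\circ\psi=\Id$ propagates to a neighborhood by uniqueness of analytic continuation), and closed in $\Lambda$. Closedness is where the real work is: if $z_0\in\overline{S}\cap\Lambda$, I must rule out that $\psi(z_0)$ escapes to the boundary of $\tilde\Omega$, i.e. that $\psi(z_0)\in(-\infty,-1/2]\cup[1/7,\infty)$. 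For the right cut $[1/7,\infty)$: along a sequence $z_k\to z_0$ in $S$ we have $z_k=y(\psi(z_k))$; if $\psi(z_k)\to t\geq 1/7$ then, using that $y$ restricted to $[0,1/7)$ is increasing with $y(1/7^-)=1/\rho$ and $y'(1/7)<\infty$ from \eqref{eqn-y_'}, together with $|z_k|<1/\rho$, one derives a contradiction — $\psi(z_0)$ would have to equal some real point $\leq 1/7$ with $y$ still analytic there. (The quantitative input is $A(y(1/7))=7/\rho$, giving $\sum a_n t^n<1$ for $|t|\le y(1/7)$, so $|\psi(z)|<$ something controlling how close $\psi$ gets to $1/7$; this is exactly the estimate already used in Lemma \ref{lem-psi_analytic}.) For the left cut $(-\infty,-1/2]$: since $\psi(z)=z/A(z)$ with $A$ non-vanishing on $\overline\Lambda$ and $A(0)=1$, and $|z|<1/\rho\approx 0.147$ while $A$ is close to $1$ on $\Lambda$, one gets $|\psi(z)|<1/\rho\cdot(1+\varepsilon)$, comfortably less than $1/2$, so $\psi(\Lambda)$ cannot reach the left cut at all — this case is essentially trivial from the numerics.

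Once $S=\Lambda$ is established, the three conclusions follow at once: $\psi(\Lambda)\subset\tilde\Omega$ by definition of $S$; $y$ is analytic on $\psi(\Lambda)$ because $\psi(\Lambda)\subset\tilde\Omega$ and $B$ (hence $y(z)=zB(z)$) is analytic on $\tilde\Omega$; and $(y\circ\psi)|_\Lambda=\Id|_\Lambda$ by the defining property of $S$ and uniqueness of analytic continuation from $y(\Omega)$. I expect the main obstacle to be the closedness step for the right cut $[1/7,\infty)$ — specifically, carefully excluding the scenario where $\psi(z_0)$ hits $1/7$ itself, since $1/7$ is a genuine singularity of $y$; here one must use the finiteness of $y'(1/7)$ together with the strict inequality $|z_0|<1/\rho=y(1/7)$ to conclude that $\psi(z_0)$ stays strictly to the left of $1/7$, so that $y$ remains analytic at $\psi(z_0)$ and the limiting identity $z_0=y(\psi(z_0))$ makes sense. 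The left cut and the openness/nonemptiness of $S$ are comparatively routine.
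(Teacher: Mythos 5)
Your overall strategy (push the identity $y\circ\psi=\Id$ from $y(\Omega)$ out to $\Lambda$ by an open-and-closed argument, with the only real issue being that $\psi$ might hit the cuts bounding $\tilde{\Omega}$) is reasonable, and your disposal of the left cut is correct: since $\sup_{\Lambda}|A-1|\leq A(1/\rho)-1$, one gets $|\psi(z)|\leq (1/\rho)/(2-7/\rho)\approx 0.1506<1/2$ on $\Lambda$, so $(-\infty,-1/2]$ is unreachable. But the step you yourself flag as the crux -- excluding $\psi(z_0)\in[1/7,\infty)$ in the closedness argument -- has a genuine gap. Your a priori bound only confines $\psi(z_0)$ to a disk of radius $\approx 0.1506$, which strictly contains $1/7\approx 0.1429$, so the dangerous scenario is $\psi(z_0)=t$ with $t\in[1/7,\,0.1506)$. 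Your proposed contradiction uses that $y$ is increasing on $[0,1/7)$ with $y(1/7^-)=1/\rho$ and $y'(1/7)<\infty$; these facts control $y$ only on the real segment up to $1/7$ and at $1/7$ itself, and they say nothing about the boundary values of the \emph{analytic continuation} of $y$ on the cut at points $t>1/7$. To run your argument you would need a lower bound $|y(t\pm i0)|\geq 1/\rho$ (or some comparable control) for all $t$ in $(1/7,\,0.1506)$, and nothing available at this stage of the paper provides it: the singular expansion of Proposition \ref{prop-B_sing} is only an asymptotic statement as $z\to 1/7$ with no quantitative radius, so it handles a shrinking neighborhood of $1/7$, not the whole interval. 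Thus the case $t=1/7$ (which your continuity argument does dispatch, since it would force $z_0=1/\rho\notin\Lambda$) is fine, but the case $t>1/7$ is not actually ruled out by what you invoke.

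The paper closes exactly this hole by a different mechanism that never needs any knowledge of $y$ beyond $1/7$: it proves (Claim 1) that $\psi$ maps non-real points of $\overline{\Lambda}$ to non-real points, by showing that $\psi(z)\in\mathbb{R}$ forces $A(z)=\beta z$ with $|\beta|>6$ (from $|A(z)|\geq 2-7/\rho>6/\rho$) while simultaneously $|\beta|\leq\sup_{\Lambda}|A'|\leq A'(1/\rho)<1$ unless $z=\overline{z}$, using \eqref{eqn-A_'}; and then (Claim 2) that on the real segment $[-1/\rho,1/\rho]$ the map $\psi$ is increasing with $\psi(1/\rho)=1/7$ and $\psi(-1/\rho)>-1/2$, so real points of $\Lambda$ land strictly inside $(-1/2,1/7)$. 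Since the complement of $\tilde{\Omega}$ is contained in $\mathbb{R}$, this immediately gives $\psi(\Lambda)\subset\tilde{\Omega}$, and the identity then follows by permanence, as in your final paragraph. A secondary, fixable point: as stated, your set $S$ is not obviously open, since the relation $y(\psi(z_0))=z_0$ at a single point does not by itself propagate to a neighborhood; you should either require in the definition of $S$ that the identity hold on a neighborhood, or work with the connected component of $\{z\in\Lambda:\psi(z)\in\tilde{\Omega}\}$ containing $y(\Omega)$ and apply the identity theorem there. The essential missing idea, however, is an argument that prevents $\psi$ from landing on the cut strictly to the right of $1/7$; without something like the paper's Claim 1, your closedness step does not go through.
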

\begin{proof}
The proof is based on two complementary claims about $\psi$.

\textit{Claim 1}: If $z\in\overline{\Lambda}\setminus{\mathbb{R}}$, then $\psi(z)\not\in \mathbb{R}$.

To see verify the claim, suppose, toward showing the contrapositive, that $\psi(z)\in\mathbb{R}$ for some~$z\in\overline{\Lambda}$. Then $A(z)=\beta z$ for some~$\beta\in \mathbb{R}$.
Observe that $\rho>6.8$ by Proposition \ref{prop-evaluation}, so that $y(1/7)=1/\rho <0.15$, and this implies that
$
6/\rho <2-7/\rho.
$
From $\beta z=1+(A(z)-1)$, we see that
\begin{align*}
|\beta z| &\geq 1-\sup_{w\in \Lambda} |A(w)-1|=1-(A(1/\rho)-1)\\
&=2-7/\rho>6/\rho\,\\
&> 6|z|.
\end{align*}
It follows that $|\beta|>6$. 

On the other hand, since $A(z)=\beta z$, we see that $A(\overline{z})=\overline{A(z)}=\beta \overline {z}$. It follows that
\[\beta(z-\overline{z})=A(z)-A(\overline{z})=\int_{\overline{z}}^z A'(w) \, dw ,\]
where the integration path is the line segment from $\overline{z}$ to $z$, as $\Lambda$ is convex. Therefore,
\[
|\beta|\cdot |z-\overline{z}|\leq |z-\overline{z}|\cdot \sup_{w\in \Lambda} |A'(w)|.
\]
Since $A$ has non-negative Taylor coefficients, $A'$ is bounded by $A'(1/\rho) < 1$, which was evaluated in \eqref{eqn-A_'}. If $z\neq \overline {z}$, this would imply that 
$|\beta| < 1$, which is impossible, since we already showed that $|\beta| >6$.  It follows that $z=\overline{z}$, verifying the claim. 

\textit{Claim 2}:
$\psi$ maps $\overline{\Lambda}\cap\mathbb{R}=[-1/\rho,1/\rho]$ bijectively onto $[\psi(-1/\rho), 1/7]\subset (-1/2, 1/7]$. 

To see why this is true, note that $\psi$ maps the interval $[y(-1/7), 1/\rho]$ bijectively to $[-1/7,1/7]$ by Lemma \ref{lemma-psi}, so it remains only to consider $[-1/\rho,\, y(-1/7))$. 
Since $\psi'(0)=1$, and $\psi'$ is non-vanishing on $\Lambda$ by Lemma~\ref{lem-psi_analytic}, we see that $\psi$ is increasing on $[-1/\rho,\, y(-1/7))$ and maps the latter interval onto 
$[\psi(-1/\rho),-1/7)$, verifying the bijection. To verify the containment~$[\psi(-1/\rho), 1/7]\subset (-1/2, 1/7]$, recall that we approximated $A(1/\rho)$ in the proof of \linebreak Lemma~\ref{lem-psi_analytic}. Here, the estimate $A(1/\rho)<3/2$ is sufficient, since then we have (by the non-negativity of the coefficients of $A$) that $A(-1/\rho)>1/2$, and therefore 
\[
|\psi(-1/\rho)|=\left| \frac{-1/\rho}{A(-1/\rho)} \right|= \frac{1/\rho}{A(-1/\rho)}<2/\rho <1/2,
\]
which verifies the claim. 

Taken together, these two claims show that $\psi(\Lambda)\subset\tilde{\Omega}$. It follows that $y$ is analytic on $\psi(\Lambda)$. Furthermore, $(y\circ\psi)|_{\Lambda}=\Id|_\Lambda$, by the principle of permanence, since $(y\circ\psi)|_{y(\Omega)}=\Id|_{y(\Omega)}$ and $y(\Omega)\subset \Lambda$. 
\end{proof}

Having established the analytic continuation of $\psi$ to $\Lambda$ in the last two lemmas, we proceed now with the the continuation of $\psi$ to a Delta-domain $\Delta_{1/\rho}$ around $\Lambda$. The first step is to continue~$\psi$ near $y(1/7)=1/\rho$. For $z\in\mathbb{C}$, $\epsilon>0$, and $\delta \in (0, \pi/2)$, define the domain
\[
D_{\epsilon,\delta}(z):=\{w:|w-z|<\epsilon, |\Arg(w-z)| >\delta\},
\]
where $\Arg$ denotes the principal value of the argument. We claim that $\psi$ can be continued analytically to $D_{\epsilon,\delta}(1/\rho)$, for some $\epsilon>0$ and some $\delta\in (0,\pi/2)$. The basic idea is that the map~$z\mapsto y(z)$ acts approximately as a dilation near $z=1/7$, and its inverse $y^{-1}$ locally extends~$\psi$ to a domain of the desired form. We make this precise in the following lemma.

\begin{lemma}\label{lemma_z}
There exist $\epsilon>0$ and $\epsilon'>0$ such that $D_{\epsilon',3\pi/8}(1/\rho)\subset y(D_{\epsilon, \pi/4}(1/7))$. Moreover,~$\psi$ is analytic on $D_{\epsilon',3\pi/8}(1/\rho)$, and
$(y\circ \psi)|_{D_{\epsilon',3\pi/8}(1/\rho)}=\Id|_{D_{\epsilon',3\pi/8}(1/\rho)}$.
\end{lemma}
\begin{proof}

Looking back at \eqref{eqn-B_sing}, we see that, despite the presence of higher-order logarithmic terms, there is a simple linear approximation for $B(z)$ as $z\to 1/7$ in $\tilde{\Omega}$. In particular,
\[
B(z)=f_0+7f_1(1/7-z) +\oh((1/7-z)^2),
\]
where $f_0$ and $f_1$ were defined in the proof of Proposition \ref{prop-B_sing}. Recall that $1/\rho=y(1/7)=B(1/7)/7=f_0/7$, and $y'(1/7)=f_0-f_1$.
Therefore, the following approximation holds for $y$:
\begin{align}\label{new-equation}
y(1/7)-y(z)&=y(1/7)-zB(z) \nonumber\\
&=f_0/7+(1/7-z)B(z)-B(z)/7\nonumber\\
&=f_0/7+f_0(1/7-z)-(f_0/7+f_1(1/7-z))+(\oh(1/7-z)^2)\nonumber\\
&=(1/7-z)(y'(1/7)+\oh(1/7-z)) 
\end{align}

As a consequence, there exists $\epsilon>0$
such that
the following properties hold for all $z\in\tilde{\Omega}$ such that $|z-1/7|<\epsilon$, and in particular on $D_{\epsilon,\pi/4}(1/7)$:
\begin{itemize}
\item[(1)] $\frac{8y'(1/7)}{9}<\frac{|y(z)-y(1/7)|}{|z-1/7|}<\frac{10y'(1/7)}{9}$
\item[(2)] $\left|\Arg\left(\frac{y(1/7)-y(z)}{1/7-z}\right)\right|<\pi/8$
\item[(3)] $|y'(z)-y'(1/7)| < \frac{y'(1/7)}{9}$
\end{itemize}

The first two properties follow from \eqref{new-equation}, as the $\oh(1/7-z)$ term can be bounded in modulus by $y'(1/7)/9$ (implying (1)), which is less than $y'(1/7)\tan(\pi/8)$ (implying (2)). The third property follows from the continuity of $y'$, as discussed before \eqref{eqn-y_'}. Properties (1) and~(2) will help us to understand the shape of $y(D_{\epsilon,\pi/4}(1/7))$, while (3) will help us to argue that $y$ is injective on $D_{\epsilon,\pi/4}(1/7)$.

Let $\gamma$ be the straight line emanating from $1/7$, of the form $\gamma(t)=1/7+te^{i\pi/4}$, for $t\geq 0$.
Property (2) implies that
\[
\pi/8<\Arg(y(\gamma(t))-y(1/7))<3\pi/8
\]
 for $t\in(0,\epsilon)$. Similarly, we also have $-\pi/8>\Arg(y(\overline{\gamma}(t))-y(1/7))>-3\pi/8$ for $t\in(0,\epsilon)$, where $\overline{\gamma}(t)=1/7+te^{-i\pi/4}$. Moreover, property (1) implies, by continuity, that for $z\in\tilde{\Omega}$ such that $|z-1/7|=\epsilon$, the following is true:
\[
\frac{8y'(1/7)\epsilon}{9}\leq |y(z)-y(1/7)|\leq \frac{10y'(1/7)\epsilon}{9}\,
\]

Thus, we have described the images of the two line segments and the circular arc that make up the boundary of $D_{\epsilon, \pi/4}(1/7)$. Our description implies that the image $y(D_{\epsilon, \pi/4}(1/7))$ contains~$D_{\epsilon', \delta}(1/\rho)$, for $\epsilon' = 8y'(1/7)\epsilon/9$, and $\delta=3\pi/8$, as shown in Figure \ref{fig-D}. This proves the first statement in the lemma.

\begin{figure}
\centering
\includegraphics[scale=0.65]{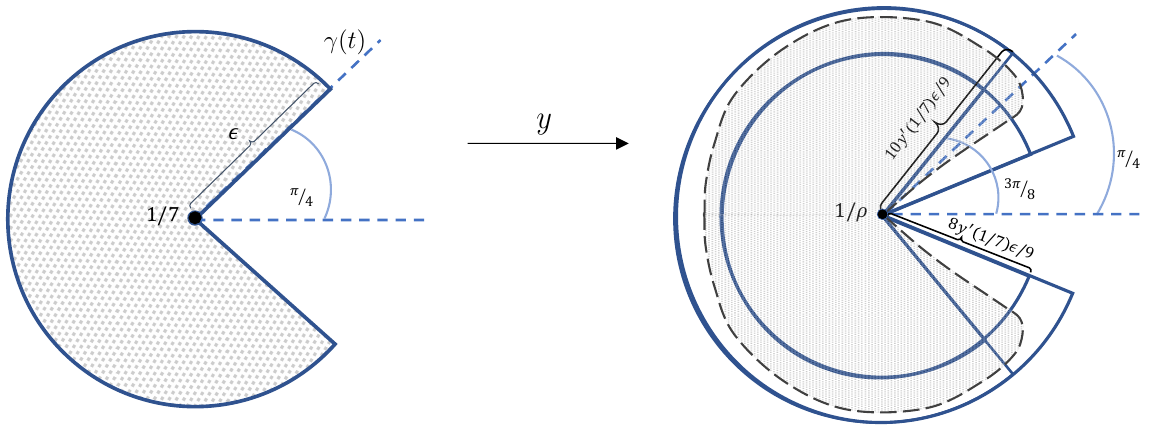}
\caption{The line $\gamma$ and the region $D_{\epsilon,\pi/4}(1/7)$ (left), with its image (shaded, right).}
\label{fig-D}
\centering
\end{figure}

To prove the second part of the lemma, we argue that $y$ is injective on $D_{\epsilon, \pi/4}(1/7)$.
Let $z_1$ and $z_2$ be two points in $D_{\epsilon, \pi/4}(1/7)$. If the line segment connecting $z_1$ and $z_2$ lies in $D_{\epsilon, \pi/4}(1/7)$, then we estimate $|y(z_1)-y(z_2)|$ as follows. Since
\[y(z_1)-y(z_2)=\int_{z_2}^{z_1} y'(w)\, dw= \int_{z_2}^{z_1} y'(1/7)\,dw +\int_{z_2}^{z_1}[y'(w)-y'(1/7)]\, dw,\]
where the contour of integration is the line segment, it follows from property (3) above that
\begin{align*}\nonumber\label{eqn-estimate_1}
|y(z_1)-y(z_2)|&\geq \left|\int_{z_2}^{z_1}y'(1/7)\, dw\right| - \left |\int_{z_2}^{z_1}[y'(w)-y'(1/7)]\, dw\right|\\ \nonumber
&\geq y'(1/7)|z_1-z_2| - \frac{y'(1/7)}{9}|z_1-z_2|\\\nonumber
&= \frac{8y'(1/7)}{9} |z_1-z_2|.
\end{align*}
Thus, we see that $|y(z_1)-y(z_2)|=0$ implies that $z_1=z_2$.

In the case that the line segment connecting $z_1$ and $z_2$ does not lie in $D_{\epsilon, \pi/4}(1/7)$, then $z_1$ and $z_2$ must have non-zero imaginary parts of opposite sign, so that the line segment connecting~$z_1$ and $\overline{z_2}$ does indeed lie in $D_{\epsilon, \pi/4}(1/7)$. Moreover, it must be that ${|\Arg(z-1/7)|\in(\pi/4,3\pi/4)}$ for $z \! = \! z_1$ and $z \! = \! \overline{z_2}$, and so for all other points $z$ on this line segment. Suppose that~${y(z_1) \! = \! y(z_2)}$. Then $y(z_1) =\overline{y(\overline{z_2})}$, by the Schwarz Reflection Principle, and so the imaginary part of $y(z)$ takes opposite values at the two endpoints of the line segment connecting $z_1$ and $\overline{z_2}$. Since the imaginary part of $y(z)$ varies continuously as $z$ moves along this line segment, there is a point $w$ on the line segment at which $y(w)$ is real. 
It follows that
\[
\left|\Arg\left(\frac{y(1/7)-y(w)}{1/7-w}\right)\right|=|\Arg(1/7-w)|\in (\pi/4,3\pi/4),
\]
which contradicts property (2). 

We conclude that $y$ is injective on $\tilde{\Omega}\cap D_{\epsilon,\pi/4}(1/7)$ and admits an inverse there, which we call~$y^{-1}$. Observe that $y^{-1}$ is analytic on $y(D_{\epsilon,\pi/4}(1/7))$, as the inverse of an analytic map \cite[Ch. 9.4]{Hille}, and hence $y^{-1}$ is analytic on the subset $D_{\epsilon',3\pi/8}(1/\rho)$. Moreover, $y^{-1}$ agrees with $\psi$ on~$\Lambda\cap D_{\epsilon',3\pi/8}(1/\rho)$,  by Lemma \ref{lem-psi_analytic_2}. Consequently, we have, by the principle of permanence, the inverse relationship $(y\circ \psi)|_{D_{\epsilon',3\pi/8}(1/\rho)}=\Id|_{D_{\epsilon',3\pi/8}(1/\rho)}$, as was to be shown.
\end{proof}

\begin{proof}[Proof of Proposition \ref{prop-psi}]

To finish proving that $\psi$ can be analytically continued to $\Delta_{1/\rho}$, it suffices, by the compactness of the set
$\partial \Lambda \setminus {D_{\epsilon',3\pi/8}(1/\rho)}$ and in light of Lemma \ref{lemma_z},
to demonstrate the existence of an analytic continuation of $\psi$ around each point in an arbitrary finite subcollection of $\partial \Lambda\setminus{\{1/\rho\}}$.

Let $\tilde{w}\in \partial \Lambda \setminus{\{1/\rho\}}$. The two supporting claims in the proof of Lemma \ref{lem-psi_analytic_2} imply \linebreak that~${\psi(\tilde{w})\in\tilde{\Omega}}$. 
We claim that $y'(\psi(\tilde{w}))\neq 0$. To see why this is true, let $\gamma$ be the straight line path from $0$ to $\tilde{w}$.
If $y'(\psi(\tilde{w}))=0$, then, recalling from Lemma \ref{lem-psi_analytic_2} that $y\circ\psi=\Id|_\Lambda$, we see that
$y(\psi(w))=w$ for $w\in \gamma$. Therefore,
\[0=\lim_{w\to \tilde{w}, w\in\gamma} |y'(\psi(w))|=\lim_{w\to \tilde{w}, w\in\gamma} 1/|\psi'(w)|,\]
which implies that $|\psi'(w)|\to \infty$ as $w\to\tilde{w}$ along $\gamma$. This contradicts the fact from Lemma \ref{lem-psi_analytic} that $\psi'$ extends continuously to $\partial \Lambda$ and so is bounded there. So $y'(\psi(\tilde{w}))\neq0$, as claimed.

By the inverse function theorem, $y$ is locally invertible at $\psi(\tilde{w})$, and the local inverse extends~$\psi$ analytically to a neighborhood of $\tilde{w}$ that maps into $\tilde{\Omega}$. 

We have extended $\psi$ to a Delta-domain $\Delta_{1/\rho}$ by patching together \textit{local} inverses of $y$. \linebreak To see that $y$ is a \textit{global} inverse of $\psi$, one need only observe that $\psi(\Delta_{1/\rho})\subset\tilde{\Omega}$, by construction. \linebreak It follows that $y$ is analytic on $\psi(\Delta_{1/\rho})$, and,  by the principle of permanence, \linebreak that ${(y\circ \psi)|_{\Delta_{1/\rho}}=\Id|_{\Delta_{1/\rho}}}$. 
\end{proof}

\subsection[]{Asymptotic singular expansion of $\psi$ near $y(1/7)$}
Let $\Delta_{1/\rho}$ be the Delta-domain from Proposition \ref{prop-psi}. 
We apply a bootstrapping procedure to \eqref{eqn-B_sing} to derive an asymptotic singular expansion for $\psi=y^{-1}$ near the singularity $1/\rho$. The general idea of bootstrapping as well as some examples are discussed in \cite[Ch. 2]{DeBruijn}. Precisely, we show the following, with $\lambda$ as in \eqref{eqn-lambda}.
\begin{proposition} Set $V=1-\rho z$. The function $\psi$ admits the following singular expansion: as~$z\to 1/\rho$ in $\Delta_{1/\rho}$, we have\label{prop-psi_sing}
\begin{equation} \label{eqn-psi-sing}
\psi(z)=\gamma(V)+CV^6\log V+\oh(V^7 \log V), 
\end{equation}
where
\[
C=\frac{7^5K\rho}{6!(7+\rho \lambda)^7} ,
\]
and $\gamma$ is a degree-six polynomial with $\gamma(0)=1/7$. 
Furthermore, the error term $\psi(z)-\gamma(V)-CV^6\log V$ is analytic in $\Delta_{1/\rho}$.
\end{proposition}
\begin{proof}
With $Z=1-7z$, \eqref{eqn-B_sing} implies that
\[
y(z)=\frac{1-Z}{7}\left(p(Z)-\frac{K}{6!}Z^6\log Z+\mathcal{O}(Z^7\log Z)\right)
,
\]
as $z\to 1/7 \in \tilde{\Omega}$. By the identity $(y\circ \psi)|_{\Delta_{1/\rho}}=\Id|_{\Delta_{1/\rho}}$, if we evaluate both sides of the equation at $\psi(z)$, for $z$ in a small neighborhood of $1/\rho$ in $\Delta_{1/\rho}$,
and if we apply the change of variables~$Y=1-7\psi(z)$, then we obtain
\[
\frac{1-Z}{7}=z=y(\psi(z))=\frac{1-Y}{7}\left(p(Y)-\frac{K}{6!}Y^6\log(Y)+\mathcal{O}(Y^7\log Y)\right),
\]
which implies
\begin{equation}\label{eqn-Z}
Z-1=(Y-1)\left(p(Y)-\frac{K}{6!}Y^6\log Y+\oh(Y^7\log Y)\right),
\end{equation}
as $z\to 1/\rho$ in $\Delta_{1/\rho}$. 
In \eqref{eqn-polynomial} we see that $p(Y)=(-K/6!)\log(21/8)Y^6+\sum_{n=0}^6 f_n Y^n$. Thus,
upon setting
\[W=\frac{Z+f_0-1}{f_0-f_1},\] we find from \eqref{eqn-Z} that as $z\to 1/\rho$ in $\Delta_{1/\rho}$, 
\begin{equation}
\label{eqn-sub}Y=W-(Q(Y)+cY^6\log Y+\oh(Y^7\log Y)),
\end{equation}
where
{
\small
\begin{align*}
Q(Y)= \frac{{f_{1} - f_{2}}}{f_{0} - f_{1}}Y^2 +\frac{{f_{2} - f_{3}}}{f_{0} - f_{1}}Y^3 + \frac{{f_{3} - f_{4}}}{f_{0} - f_{1}}Y^4 +  \frac{{f_{4} - f_{5}}}{f_{0} - f_{1}}Y^5 +\frac{f_{5} - f_{6}+\frac{K}{6!}\log\left(\frac{21}{8}\right) }{f_{0} - f_{1}}Y^6,
\end{align*}
}
and
\begin{equation*}\label{eqn-c}
c=\frac{K}{6!(f_0-f_1)}=\frac{K}{6!(7/\rho+\lambda)}.
\end{equation*}
To simplify notation, define the list of constants $(a_i)_{i=2}^6\subset \mathbb{R}$ so that $Q(Y)=\sum_{i=2}^6 a_i Y^i$.

$W(z)$ and $Y(z)$
tend to $0$ as $z\to 1/\rho$ in $\Delta_{1/\rho}$. It follows then from \eqref{eqn-sub} that $W\sim Y$ as~$z\to 1/\rho$.
 Therefore, $Y=\oh (W)$ and $\log(Y)\sim \log(W)$ (as $z\to 1/\rho$ in $\Delta_{1/\rho}$, which is assumed for the rest of the proof).
These estimates for $Y$ imply  that
\[Q(Y)=a_2W^2+\oh(W^3),\]
which, upon substitution into \eqref{eqn-sub}, yields
\[
Y=W-a_2W^2+\oh(W^3).
\]

We substitute this new estimate for $Y$ into $Q(Y)$, which yields
\[Q(Y)=a_2W^2+(a_3-2a_2^2)W^3+\oh(W^4),\]
and hence, 
from \eqref{eqn-sub},
\[
Y=W-a_2W^2-(a_3-2a_2^2)W^3+\oh(W^4).
\]
After another iteration, we obtain from \eqref{eqn-sub} that
\begin{align*}
Y&=W-Q(Y)+\oh(W^5)\\
&= W- a_2W^2-(a_3-2a_2^2)W^3-(5a_2^3-5a_2a_3+a_4W^4)+\oh(W^5).
\end{align*}
After a final iteration, we obtain from \eqref{eqn-sub} that
\begin{equation*}\label{eqn-bootstrapping}
Y=W-a_2W^2-\cdots-a_6'W^6+ \oh(W^7)-cY^6\log Y +\oh(Y^7\log Y),
\end{equation*}
where $a_6'$ is an unimportant constant.
Since $Y=W+\oh(W^2)$, we have that the 
$\oh(Y^7\log Y)$ error term
is $\oh (W^7\log W)$, and that $Y^6\log Y=W^6\log W+\oh(W^7\log W)$.
We can therefore express $Y$ in terms of $W$ as follows:
\[
Y=W-a_2W^2-\cdots-a_6'W^6-cW^6\log W +\oh(W^7\log W).
\]
We substitute $1-7\psi(z)$ for $Y$ in the left side of the latter expression, obtaining
\begin{equation}\label{eqn-new_psi}
\psi(z)=
\frac{1}{7}+\tilde{P}(W)+\frac{c}{7}W^6\log(W)+\oh(W^7\log W),
\end{equation}
where $\tilde{P}(W)$ is a degree-six polynomial with $\tilde{P}(0)=0$. 

To finish the proof, it only remains to rewrite \eqref{eqn-new_psi} with $W$ in terms of $V$:
\[
W=\frac{Z+f_0-1}{f_0-f_1}=\frac{Z+7/\rho-1}{7/\rho+\lambda}=\frac{7/\rho-7z}{7/\rho+\lambda}=\frac{7}{7+\rho \lambda}(1-\rho z)=\frac{7}{7+\rho \lambda}V.
\]
Substituting the last expression into \eqref{eqn-new_psi} leads immediately to \eqref{eqn-psi-sing}, with
\begin{equation}\label{gamma}\nonumber
\gamma(V)=\frac{1}{7}+\tilde{P}\left(\frac{7}{7+\rho \lambda}V\right),
\end{equation}
and
\begin{equation}\label{C}\nonumber
C=\frac{c}{7}\left(\frac{7}{7+\rho \lambda}\right)^{6}=\frac{7^5K\rho}{6!(7+\rho \lambda)^7}.
\end{equation}
The error term in \eqref{eqn-new_psi} is analytic in $\Delta_{1/\rho}$ since all the other terms are. 
\end{proof}

\subsection[]{Asymptotic singular expansion of $A$ near $y(1/7)$}
Since $\psi$ is injective on $\Delta_{1/\rho}$ by Proposition \ref{prop-psi}, and $\psi(0)=0$, we see that $A(z)=z/\psi(z)$ extends to be analytic on $\Delta_{1/\rho}$. We seek its singular expansion near $1/\rho=y(1/7)$. Let $C$ be the constant from Proposition \ref{prop-psi_sing}.
\begin{proposition}\label{prop-A_sing} Set $V=1-\rho z$. The function $A$ admits the following singular expansion:
as~$z\to 1/\rho$ in $\Delta_{1/\rho}$,
\begin{equation}\label{eqn-A_sing1}
A(z)=\eta(V)-\frac{49C}{\rho}V^6\log V+\oh(V^7\log V),
\end{equation}
where $\eta$ is a degree-seven polynomial.
The error term 
$A(z)+(49C/\rho)V^6\log V-\eta(V)$ is analytic in $\Delta_{1/\rho}$.
\end{proposition}
\begin{proof}
Since $A(z)=z/\psi(z)$, we need to obtain a useful expression for the reciprocal of $\psi$. Referring to Proposition \ref{prop-psi_sing}, let $E(V)$ denote the error term in \eqref{eqn-psi-sing}, namely,
\[
E(V):=\psi(z)-\gamma(V)-CV^6\log V.
\] 
$E(V)$ is visibly analytic for $z\in\Delta_{1/\rho}$.

Next, define $T(V)$ to be the degree-six Taylor approximation polynomial for $1/\gamma(V)$ centered at $V=0$, i.e. $\gamma(V)T(V) = 1 + \oh(V^7)$ as $V\to 0$. 
Also define $S(V):=T(V)-49CV^6\log V$, which is analytic on $\Delta_{1/\rho}$. Noting that $\gamma(0) = 1/7$ and $T(0) = 7$, we have
\begin{align}\label{psi-S}
\psi(z)S(V)&=(\gamma(V)+CV^6\log V+E(V))(T(V)-49CV^6\log V)\nonumber\\
&=1+E(V)T(V)-49CV^6\log V\cdot E(V) +\oh(V^7 \log V)\nonumber\\
&=1+\oh(V^7 \log V)
\end{align}
as $V\to 0$.
Note that the $\oh(V^7\log V)$ error term $\tilde{E}(V):=\psi(z)S(V)-1$ is analytic on $\Delta_{1/\rho}$. 

We rewrite \eqref{psi-S} as $1/\psi(z) = S(V)-\tilde{E}(V)/\psi(z)$.
Since $z=(1-V)/\rho$, this can in turn be rewritten as follows:
\begin{align*}
A(z)=\frac{z}{\psi(z)}&=\frac{1-V}{\rho}S(V)-A(z)\tilde{E}(V)\\
&=\frac{1-V}{\rho}(S(V)+49CV^6\log V)-\frac{1-V}{\rho}49CV^6 \log V-A(z)\tilde{E(V)}\\
&=\frac{1-V}{\rho}T(V) - \frac{49CV^6\log V}{\rho} -A(z)\tilde{E}(V)+ \frac{49CV^7 \log V}{\rho}\\
&=\eta(V)-\frac{49C}{\rho}V^6\log V +\tilde{\tilde{E}}(V),
\end{align*}
where $\eta(V) := (1-V)T(V)/\rho$ is a degree-seven polynomial, and
\[
\tilde{\tilde{E}}(V):=\frac{49C}{\rho}V^7\log (V) - A(z)\tilde{E}(V)
\]
is analytic in $\Delta_{1/\rho}$. Moreover, $\tilde{\tilde{E}}(V)= \oh(V^7\log V)$ as $z\to 1/\rho$, since the same is true of $\tilde{E}(V)$ and since $A$ is continuous at $1/\rho$. This establishes \eqref{eqn-A_sing1}.
\end{proof}

\subsection[]{Asymptotic formula for $(a_n)$}
Looking at \eqref{eqn-A_sing1}, we see that the polynomial $\eta(V)$ does not contribute to the asymptotics of the sequence $(a_n)$ of Taylor coefficients of $A(z)$. The asymptotic expression for $a_n$ is determined by the other two summands, in accordance with Theorem \ref{thm-transfer}. For each positive integer $n$, the coefficient of $z^n$ in the Taylor expansion of $(-46C/\rho)V^6\log V$ is shown by Lemma \ref{eqn-log_taylor} to be
\begin{align*}
-\frac{49C}{\rho}[z^n][(1-\rho z)\log(1-\rho z)] &= -\frac{49C}{\rho} \cdot \frac{-6!\rho^n}{n(n-1)\cdots (n-6)}\\
&=\frac{49C\cdot 6!}{\rho}\cdot \rho^n\cdot \left(\frac{1}{n^7}+\oh\left(\frac{1}{n^8}\right)\right),
\end{align*}
as $n\to\infty$.
Note that $(49C\cdot6!)/\rho$ is precisely the constant $M$ from \eqref{eqn-M}, as we computed with SAGE. By Theorem \ref{thm-transfer}, the $\oh(V^7\log V)$ term in \eqref{eqn-A_sing1} contributes the term $\oh(\rho^nn^{-8}\log n)$ to the asymptotic expression for $a_n$. We thus obtain the following:
\begin{align*}
a_n&=M\rho^n\left(\frac{1}{n^7}+\mathcal{O}\left(\frac{1}{n^{8}}\right)\right)+\mathcal{O}\left(\frac{\rho^n\log n}{n^{8}}\right)\\
&=M\rho^n\left(\frac{1}{n^7}+\mathcal{O}\left(\frac{\log n}{n^{8}}\right)\right),
\end{align*}
as $n\to\infty$. This verifies \eqref{eqn-a_asym} and completes the proof of Theorem~\ref{thm-conjecture}.


\section*{Appendix}
\label{sec-appendix}
We record here, as Theorem \ref{thm-transfer}, several ``transfer theorems" of Flajolet and Odlyzko \cite{Odlyzko}, which we use to transfer asymptotic growth estimates of a function $f$ near a singularity to asymptotic growth estimates for the function's Taylor coefficients $(f_n)_{n\geq 0}$.

Assume the following setup: $f$ is a function analytic at the origin, with radius of convergence~$R>0$ and Taylor expansion $f(z)=\sum_{n=0}^\infty f_nz^n$, and $f$ can be continued analytically to a ``Delta-domain" $\Delta_R$, which is defined to be any open set of the form
\[\{z:|z|<R+\epsilon, |\operatorname{Arg}(z-R)|>\theta\},\] for some $\epsilon>0$ and some $\theta\in(0,\pi/2)$. Here, $\Arg$ denotes the principle value of the argument. In particular, $f$ has a unique singularity on its disk of convergence, namely the point $R$.
We may refer to $\Delta_R$ as a ``Delta-domain around the disk of convergence of $f$."
Often in practical applications of these transfer theorems, a function will admit analytic continuation to well-beyond a Delta-domain, e.g. to $\mathbb{C}\setminus{[R,\infty)}$.

Let $\log$ denote the principle branch of the logarithm.

\begin{theorem}{\cite[Cor. 2 and Thm. 2]{Odlyzko} }\label{thm-transfer}
Assume that $f$ is analytic in a Delta-domain $\Delta_R$. 
\begin{enumerate}

\item
If $f(z)\sim K(1-z/R)^\alpha$ as $z\to 1$ in $\Delta_R$, where $\alpha\in \mathbb{C}\setminus{\{0,1,2,\dots\}}$, and $K\in\mathbb{C}$, then
\[
f_n\sim \frac{1}{R^n}\cdot \frac{K}{\Gamma(-\alpha)}n^{-\alpha-1}, \quad\text{ as }\, n\to\infty. 
\]
\item
If $f(z)=\oh((1-z/R)^\alpha(\log(1-z/R))^\gamma)$ as $z\to R$ in $\Delta_R$, where $\alpha,\gamma\in\mathbb{R}$, then 
\[
f_n=\oh\left(\frac{n^{-\alpha-1}(\log n)^\gamma}{R^n}\right), \quad\text{ as }\, n\to\infty. 
\]
\item As a consequence, if $F(z)=g(z)+f(z)$, for $f$ and $g$ analytic on $\{z: |z|<R\}$, and furthermore $f(z)$ is analytic in a Delta-domain $\Delta_R$ and satisfies 
\[f(z)=\oh\big((1-z/R)^\alpha(\log(1-z/R))^\gamma\big),\] 
as $z\to R$ in $\Delta_R$,
then
\[
F_n=g_n+f_n=g_n+\oh\left(\frac{n^{-\alpha-1}(\log n)^\gamma}{R^n}\right), \quad\text{ as }\, n\to\infty.\] 
\end{enumerate}
\end{theorem}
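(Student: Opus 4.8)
The plan is to carry out the standard singularity-analysis argument behind these transfer theorems, which rests on two ingredients: an exact evaluation of the Taylor coefficients of the reference functions $(1-z)^\alpha$, and a Cauchy-integral estimate over a Hankel-type contour that converts growth bounds near the dominant singularity into bounds on the coefficients. By replacing $f(z)$ with $f(Rz)$ — which multiplies $f_n$ by $R^n$ and carries $(1-z/R)^\alpha$ to $(1-z)^\alpha$ — it suffices throughout to take $R=1$. For the reference coefficients, the binomial series gives $[z^n](1-z)^\alpha=(-1)^n\binom{\alpha}{n}=\frac{\Gamma(n-\alpha)}{\Gamma(-\alpha)\,\Gamma(n+1)}$, which is legitimate precisely because $\alpha\notin\{0,1,2,\dots\}$ keeps $\Gamma(-\alpha)$ finite and nonzero; Stirling's formula for the ratio of Gamma functions then yields $\Gamma(n-\alpha)/\Gamma(n+1)=n^{-\alpha-1}\big(1+\oh(1/n)\big)$, so $[z^n](1-z)^\alpha\sim n^{-\alpha-1}/\Gamma(-\alpha)$.

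The engine of parts (2) and (3) is the $\oh$-transfer. Suppose $g$ is analytic in a Delta-domain around the unit disk and $g(z)=\oh\big((1-z)^\alpha(\log(1-z))^\gamma\big)$ as $z\to1$. Starting from $g_n=\frac{1}{2\pi i}\oint \frac{g(z)}{z^{n+1}}\,dz$ over $|z|=r<1$, deform the contour inside the Delta-domain to one made of: a small arc of the circle $|z-1|=1/n$; two segments running from its endpoints outward along the directions $\Arg(z-1)=\pm\phi$ for a fixed $\phi\in(\theta,\pi/2)$; and a closing arc of a fixed radius $1+\eta$ still inside the Delta-domain. On the closing arc $|z|^{-n-1}\le(1+\eta)^{-n-1}$ is geometrically small while $g$ is bounded, so that piece is $\oh\big((1+\eta)^{-n}\big)$; on the small arc the length is $\oh(1/n)$, $|g|=\oh\big(n^{-\alpha}(\log n)^\gamma\big)$, and $|z|^{-n-1}=\oh(1)$ since $|z|\ge1-1/n$, so that piece is $\oh\big(n^{-\alpha-1}(\log n)^\gamma\big)$; on a segment a routine estimate gives $|z|^{-n}\le e^{-cn|z-1|}$ (valid since $\cos\phi>0$), and with $|\log(1-z)|=\log\frac{1}{|z-1|}+\oh(1)$ the substitution $|z-1|=u/n$ reduces the segment integral to a multiple of $n^{-\alpha-1}\int u^\alpha(\log n-\log u)^\gamma e^{-cu}\,du$. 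Splitting each segment at distance $(\log n)^2/n$ from the singularity — where $\log\frac{1}{|z-1|}\sim\log n$, so the integrand there is $(\log n)^\gamma\big(1+o(1)\big)\,u^\alpha e^{-cu}$, while on the remainder $e^{-cn|z-1|}\le e^{-c(\log n)^2}$ beats every power of $n$ — shows this piece is $\oh\big(n^{-\alpha-1}(\log n)^\gamma\big)$; this splitting is what makes the estimate survive when $\gamma<0$. Adding the three pieces gives $g_n=\oh\big(n^{-\alpha-1}(\log n)^\gamma\big)$, and undoing the scaling restores the $R^{-n}$ factor. Part (3) is then immediate: $F_n=g_n+f_n$, and part (2) applied to $f$ gives $f_n=\oh\big(R^{-n}n^{-\alpha-1}(\log n)^\gamma\big)$.

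For part (1), write $f(z)=K(1-z)^\alpha+h(z)$; since $f(z)\sim K(1-z)^\alpha$, the remainder obeys $h(z)=o\big((1-z)^\alpha\big)$. The coefficients of $K(1-z)^\alpha$ were computed above, and for $h$ one runs the same Hankel-contour estimate with the $o$ built in: given $\mu>0$, pick $\delta>0$ with $|h(z)|\le\mu|1-z|^\alpha$ for $|1-z|<\delta$; bound the small arc and the portion $|z-1|\le\delta$ of each segment by $C\mu\,n^{-\alpha-1}$ exactly as before, and bound the portion $|z-1|>\delta$ together with the closing arc by $\oh\big(e^{-cn\delta}\big)$, which is $o(n^{-\alpha-1})$. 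Letting $n\to\infty$ and then $\mu\to0$ gives $h_n=o(n^{-\alpha-1})$, whence $f_n=\tfrac{K}{\Gamma(-\alpha)}\,n^{-\alpha-1}\big(1+o(1)\big)$, which is the claim once the scaling is undone.

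I expect the main obstacle to be the segment estimate: one must keep the entire contour inside the Delta-domain, justify the geometric bound $|z|^{-n}\le e^{-cn|z-1|}$ on the slanted segments, and verify after the rescaling $|z-1|=u/n$ that the surviving power of $n$ is exactly $n^{-\alpha-1}$. The logarithmic factor adds bookkeeping — handled via $\log(1-z)=\log n-\log u+\oh(1)$ on the rescaled segment together with the $(\log n)^2/n$ split so that the bound holds uniformly in the sign of $\gamma$ — and the upgrade from $\oh$ to $o$ needed for part (1) requires the two-regime split of each segment with the $\mu$-then-$\delta$ bookkeeping above.
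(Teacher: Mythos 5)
Your proposal is correct: it is the standard Flajolet--Odlyzko argument (exact binomial coefficients for $(1-z)^\alpha$ plus a Hankel-type contour split into inner arc, slanted segments with the $|z|^{-n}\le e^{-cn|z-1|}$ bound and the rescaling $|z-1|=u/n$, and an exponentially negligible outer arc), with a sensible device for the $\gamma<0$ log factor and the $\mu$-then-$\delta$ refinement upgrading the $\mathcal{O}$-transfer to the $o$-transfer needed for part (1). The paper itself does not prove this theorem---it quotes it from the cited reference \cite{Odlyzko}, noting only that the extension from $R=1$ to general $R$ (your rescaling step) is routine---so your argument is essentially a faithful reconstruction of the proof in that source rather than a different route.
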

Theorem \ref{thm-transfer} is used in Section \ref{sec-improved} to obtain asymptotic estimates for $(a_n)$ and $(b_n)$, with~${\gamma \! = \! 1}$ in both cases. 
The theorem is actually stated and proved in \cite{Odlyzko} for the special case $R=1$, and the adjustments for $R>0$ are straightforward.


\section*{Acknowledgements}

The author would like to thank Dan Romik, Greg Kuperberg, Michael Drmota, and Hamilton Santhakumar for their encouragement and for many helpful discussions. The author would also like to thank several anonymous referees for their careful suggestions.


\bibliographystyle{alphaurl}
\bibliography{ct-sample-3.bib}

\begin{thebibliography}{BTWZ22}

\bibitem[Bai64]{Bailey}
Wilfred~N. Bailey.
\newblock {\em Generalized Hypergeometric Series}.
\newblock Cambridge Tracts in Mathematics and Mathematical Physics. Stechert -
  Hafner, 1964.
\newblock URL: \url{https://books.google.com/books?id=QT4QzgEACAAJ}.

\bibitem[BMC15]{BC}
Mireille Bousquet-M\'elout and Julien Courtiel.
\newblock Spanning forests in regular planar maps.
\newblock {\em Journal of Combinatorial Theory Series A - JCTA}, 135:1--59,
  2015.
\newblock \href {https://doi.org/10.1016/j.jcta.2015.04.002}
  {\path{doi:10.1016/j.jcta.2015.04.002}}.

\bibitem[BTWZ22]{Bostan}
Alin Bostan, Jordan Tirrell, Bruce Westbury, and Yi~Zhang.
\newblock On some combinatorial sequences associated to invariant theory.
\newblock {\em European Journal of Combinatorics}, 105:103554, 2022.
\newblock \href {https://doi.org/10.1016/j.ejc.2022.103554}
  {\path{doi:10.1016/j.ejc.2022.103554}}.

\bibitem[Dar78]{Darboux}
Jean-Gaston Darboux.
\newblock M\'emoire sur l'approximation des fonctions de tr\`es grands nombres,
  et sur une classe \'etendu de d'eveloppements en s\'erie.
\newblock {\em Journal de Math\'ematiques Pures et Appliqu\'ees}, 3(4):5--56,
  1878.

\bibitem[dB81]{DeBruijn}
Nicolaas~G. de~Bruijn.
\newblock {\em Asymptotic Methods in Analysis}.
\newblock Dover Publications, 1981.
\newblock URL: \url{https://books.google.com/books?id=Oqj9AgAAQBAJ}.

\bibitem[Drm04]{Drmota}
Michael Drmota.
\newblock Combinatorics and asymptotics on trees.
\newblock {\em CUBO, A Mathematical Journal}, 6(2):105--136, Aug. 2004.

\bibitem[Fei13]{Feierl}
Thomas Feierl.
\newblock Asymptotics for the number of zero drift reflectable walks in a weyl
  chamber of type {A}.
\newblock 2013.
\newblock \href {http://arxiv.org/abs/1806.05998} {\path{arXiv:1806.05998}}.

\bibitem[Fek23]{Fekete}
Michael Fekete.
\newblock \"{U}ber die verteilung der wurzeln bei gewissen algebraischen
  gleichungen mit ganzzahligen koeffizienten.
\newblock {\em Mathematische Zeitschrift}, 17(1):228--249, 1923.
\newblock \href {https://doi.org/10.1007/bf01504345}
  {\path{doi:10.1007/bf01504345}}.

\bibitem[FFN10]{Felsner}
Stefan Felsner, Eric Fusy, and Marc Noy.
\newblock Asymptotic enumeration of orientations.
\newblock {\em Discrete Mathematics and Theoretical Computer Science, DMTCS},
  12(2):249--262, 2010.
\newblock \href {https://doi.org/10.46298/dmtcs.505}
  {\path{doi:10.46298/dmtcs.505}}.

\bibitem[FO90]{Odlyzko}
Philippe Flajolet and Andrew Odlyzko.
\newblock Singularity analysis of generating functions.
\newblock {\em SIAM Journal on Discrete Mathematics}, 3(2):216--240, 1990.
\newblock \href {https://doi.org/10.1137/0403019} {\path{doi:10.1137/0403019}}.

\bibitem[FS09]{Flajolet}
Philippe Flajolet and Robert Sedgewick.
\newblock {\em Analytic Combinatorics}.
\newblock Cambridge University Press, 2009.
\newblock URL: \url{https://ac.cs.princeton.edu/}.

\bibitem[GM93]{Grabiner}
David Grabiner and Peter Magyar.
\newblock Random walks in {W}eyl chambers and the decomposition of tensor
  powers.
\newblock {\em Journal of Algebraic Combinatorics}, 2:239--260, Sept. 1993.
\newblock \href {https://doi.org/10.1023/A:1022499531492}
  {\path{doi:10.1023/A:1022499531492}}.

\bibitem[GZ92]{Gessel}
Ira Gessel and Doron Zeilberger.
\newblock Random walk in a {W}eyl chamber.
\newblock {\em Proceedings of the American Mathematical Society},
  115(1):27--31, 1992.
\newblock \href {https://doi.org/10.1090/S0002-9939-1992-1092920-8}
  {\path{doi:10.1090/S0002-9939-1992-1092920-8}}.

\bibitem[Hil59]{Hille}
Einar Hille.
\newblock {\em Analytic Function Theory, Vol. 1}.
\newblock Ginn and Company, 1959.

\bibitem[Kup94]{Greg}
Greg Kuperberg.
\newblock The quantum ${G}_2$ link invariant.
\newblock {\em International Journal of Mathematics}, 5(1):61--85, 1994.
\newblock \href {https://doi.org/10.1142/S0129167X94000048}
  {\path{doi:10.1142/S0129167X94000048}}.

\bibitem[Kup96]{Kuperberg}
Greg Kuperberg.
\newblock Spiders for rank 2 {L}ie algebras.
\newblock {\em Communications in Mathematical Physics}, 180(1):109--151, 1996.
\newblock \href {https://doi.org/10.1007/BF02101184}
  {\path{doi:10.1007/BF02101184}}.

\bibitem[Lan02]{Lang}
Serge Lang.
\newblock {\em Algebra, Third Ed.}
\newblock Graduate Texts in Mathematics. Springer Science + Business Media LLC,
  2002.
\newblock URL: \url{https://link.springer.com/book/10.1007/978-1-4613-0041-0}.

\bibitem[MM78]{Meir}
Amram Meir and John~W. Moon.
\newblock On the altitude of nodes in random trees.
\newblock {\em Canadian Journal of Mathematics}, 30(5):997--1015, Oct. 1978.
\newblock \href {https://doi.org/10.4153/CJM-1978-085-0}
  {\path{doi:10.4153/CJM-1978-085-0}}.

\bibitem[MM89]{Meir_2}
Amram Meir and John~W. Moon.
\newblock On an asymptotic method in enumeration.
\newblock {\em Journal of Combinatorial Number Theory, Series A}, 51(1):77--89,
  May 1989.
\newblock \href {https://doi.org/10.1016/0097-3165(89)90078-2}
  {\path{doi:10.1016/0097-3165(89)90078-2}}.

\bibitem[OEI]{OEIS}
OEIS.
\newblock On-line encyclopedia of integer sequences.
\newblock URL: \url{https://oeis.org}.

\bibitem[P\'37]{Polya}
George P\'olya.
\newblock Kombinatorische anzahlbestimmungen für gruppen, graphen und
  chemische verbindungen.
\newblock {\em Acta Mathematica}, 68:145--254, 1937.
\newblock \href {https://doi.org/10.1007/BF02546665}
  {\path{doi:10.1007/BF02546665}}.

\bibitem[Smi87]{Smith}
Kennan~T. Smith.
\newblock {\em Power Series from a Computational Point of View}.
\newblock Universitext. Springer-Verlag New York Inc., 1987.
\newblock \href {https://doi.org/10.1007/978-1-4613-9581-2}
  {\path{doi:10.1007/978-1-4613-9581-2}}.

\bibitem[Sta99]{Stanley}
Richard Stanley.
\newblock {\em Enumerative Combinatorics, Vol. 2.}
\newblock Cambridge University Press, 1999.
\newblock \href {https://doi.org/10.1017/CBO9780511609589}
  {\path{doi:10.1017/CBO9780511609589}}.

\bibitem[TZ04]{Tate}
Tatsuya Tate and Steve Zeldtich.
\newblock Lattice path combinatorics and asymptotics of multiplicities of
  weights in tensor powers.
\newblock {\em Journal of Functional Analysis}, 217(2):402--447, Dec. 2004.
\newblock \href {https://doi.org/10.1016/j.jfa.2004.01.004}
  {\path{doi:10.1016/j.jfa.2004.01.004}}.

\bibitem[Wes07]{Westbury}
Bruce Westbury.
\newblock Enumeration of non-positive planar trivalent graphs.
\newblock {\em Journal of Algebraic Combinatorics}, 25:357--373, Oct. 2007.
\newblock \href {https://doi.org/10.1007/s10801-006-0041-4}
  {\path{doi:10.1007/s10801-006-0041-4}}.

\bibitem[{Wol}]{Wolfram}
{Wolfram Research, Inc.}
\newblock Accessed April, 2020.
\newblock URL: \url{http://functions.wolfram.com/07.23.06.0015.01}.

\end{thebibliography}

\end{document}